\theoremstyle{plain}
\newtheorem{theor0}{Theorem}[section]
\newenvironment{theor}
  {\pushQED{\qed}\begin{theor0}}{\popQED\end{theor0}}
\newtheorem{lem0}[theor0]{Lemma}
\newenvironment{lem}
  {\pushQED{\qed}\begin{lem0}}{\popQED\end{lem0}}
\newtheorem{prop0}[theor0]{Proposition}
\newenvironment{prop}
  {\pushQED{\qed}\begin{prop0}}{\popQED\end{prop0}}
\newtheorem{cor0}[theor0]{Corollary}
\newtheorem{propr0}[theor0]{Property}
\newtheorem{hyp0}[theor0]{Hypothesis}
\newtheorem{result0}[theor0]{Result}
\newtheorem{conj0}[theor0]{Conjecture}
\newtheorem{heur0}[theor0]{Heuristics}
\theoremstyle{definition}
\newtheorem{defin0}[theor0]{Definition}
\newenvironment{defin}
  {\pushQED{\qed}\begin{defin0}}{\popQED\end{defin0}}
\newtheorem{rems0}[theor0]{Remarks}
\newtheorem{ex0}[theor0]{Example}
\newtheorem{exs0}[theor0]{Examples}
\newtheorem{rem0}[theor0]{Remark}
\newenvironment{rem}
  {\pushQED{\qed}\begin{rem0}}{\popQED\end{rem0}}
\newtheorem{qu0}[theor0]{Question}
\newtheorem{qus0}[theor0]{Questions}
  \newtheorem{as0}[theor0]{Assumption}
\newcommand{\Aa}{A}
\newcommand{\bb}{B}
\newcommand{\N}{\mathbb N}
\newcommand{\e}{\varepsilon}
\newcommand{\R}{\mathbb R}
\newcommand{\Z}{\mathbb Z}
\newcommand{\B}{\mathcal B}
\newcommand{\X}{\mathcal X}
\newcommand{\Cc}{\mathcal C}
\newcommand{\Sp}{\mathbb S}
\newcommand{\Rc}{\mathcal R}
\newcommand{\Pc}{\mathcal P}
\newcommand{\Dm}{\mathbb{D}}
\newcommand{\F}{\mathcal F}
\newcommand{\Hf}{\mathfrak H}
\newcommand{\Nc}{\mathcal N}
\newcommand{\A}{\mathcal A}
\newcommand{\p}{\mathbb{P}}
\newcommand{\E}{\mathbb{E}}
\newcommand{\calS}{\mathcal S}
\newcommand{\op}{{\operatorname{op}}}
\newcommand{\osc}{\partial^{\mathrm{osc}}}
\newcommand{\oscd}[1]{\mathrel{\osc_{#1}}}
\newcommand{\supess}{\operatorname{sup\,ess}}
\newcommand{\supessd}[1]{\mathrel{\mathop{\supess}\limits_{#1}}}
\newcommand{\infessd}[1]{\mathrel{\mathop{\infess}\limits_{#1}}}
\newcommand{\infess}{\operatorname{inf\,ess}}
\newcommand{\Mes}{\operatorname{Mes}}
\newcommand{\loc}{{\operatorname{loc}}}
\newcommand{\Ld}{\operatorname{L}}
\newcommand{\supp}{\operatorname{supp}}
\newcommand{\3}{\operatorname{|\!|\!|}}
\newcommand{\var}[1]{\mathrm{Var}\left[#1\right]}
\newcommand{\ent}[1]{\mathrm{Ent}\!\left[#1\right]}
\newcommand{\varm}[1]{\mathrm{Var}[#1]}
\newcommand{\expecm}[1]{\mathbb{E}[ #1 ]}
\newcommand{\cov}[2]{\mathrm{Cov}\left[#1;#2\right]}
\newcommand{\expec}[1]{\mathbb{E}\left[ #1 \right]}
\newcommand{\pr}[1]{\mathbb{P}\left[ #1 \right]}
\newcommand{\prm}[1]{\mathbb{P}\big[ #1 \big]}
\newcommand{\expeCm}[2]{\mathbb{E}\big[ #1 \,\big\|\,#2\big]}
\newcommand{\parfct}[1]{\partial_{#1}^{\operatorname{fct}}}
\newcommand{\pardis}[1]{\partial_{#1}^{\operatorname{dis}}}
\newcommand{\dTV}[2]{\operatorname{d}_{\operatorname{TV}}\left({#1},{#2}\right)}
\newcommand{\dW}{\operatorname{d}_{\operatorname{W}}}
\newcommand{\dK}{\operatorname{d}_{\operatorname{K}}}
\newcommand{\step}[1]{\noindent \textit{Step} #1.}
\numberwithin{equation}{section}
\title[Multiscale second-order Poincar\'e inequalities]{Multiscale second-order Poincar\'e inequalities\\in probability}
\author[M. Duerinckx]{Mitia Duerinckx}
\author[A. Gloria]{Antoine Gloria}
\address[Mitia Duerinckx]{Laboratoire de Mathématique d'Orsay, UMR 8628, Université Paris-Sud, F-91405 Orsay, France \& Universit\'e Libre de Bruxelles, Département de Mathématique, Brussels, Belgium}
\email{mduerinc@ulb.ac.be}
\address[Antoine Gloria]{Sorbonne Universit\'e, CNRS, Universit\'e de Paris, Laboratoire Jacques-Louis Lions (LJLL), F-75005 Paris, France \& Universit\'e Libre de Bruxelles, Département de Mathématique, Brussels, Belgium}
\email{gloria@ljll.math.upmc.fr}
\begin{document}
\maketitle

\begin{abstract}
Consider an ergodic stationary random field $A$ on the ambient space $\R^d$.
In a companion article, we introduced the notion of \emph{multiscale} (first-order) \emph{functional inequalities}, which extend
standard functional inequalities like Poincaré, covariance, and logarithmic Sobolev inequalities in the probability space, while still ensuring strong concentration properties.
We also developed a constructive approach to these functional inequalities, proving their validity
for prototypical examples including Gaussian fields, Poisson random tessellations, and random sequential adsorption (RSA) models, which do not satisfy standard functional inequalities.
In the present contribution, we turn to second-order Poincar\'e inequalities à la Chatterjee: while first-order inequalities quantify the distance to constants for nonlinear functions $Z(A)$ in terms of their local dependence on the random field $A$, second-order inequalities quantify their distance to normality.
For the above-mentioned examples, we prove the validity of suitable \emph{multiscale} second-order Poincar\'e inequalities. In particular, applied to RSA models, these functional inequalities allow to complete and improve previous results by Schreiber, Penrose, and Yukich on the jamming limit, and to propose and fully analyze a more efficient algorithm to approximate the latter.
\end{abstract}

\tableofcontents

\section{Introduction}

{\color{black}
Consider a random field $A$ on $\R^d$ and a $\sigma(A)$-measurable random variable $Z(A)$.
Chatterjee's version of Stein's method in the form of second-order Poincar\'e inequalities~\cite{Chat08,Chat-09,NPR-09} is a powerful tool to quantify the distance of $Z(A)$ to normality in terms of its local dependence on the underlying random field $A$ (up to second variation) via suitable ``derivatives''.
When applicable, this often leads to strong quantitative central limit theorems (CLTs).
However, similarly as first-order functional inequalities in the probability space (Poincaré, covariance, and logarithmic Sobolev inequalities), second-order inequalities essentially hold true only when $A$ has a product structure (e.g.\@ a Poisson point process) or is Gaussian with integrable covariance.
In this contribution, motivated by problems stemming from the analysis of heterogeneous materials, we aim
at extending such results by devising suitable versions of second-order Poincar\'e inequalities to treat various random fields~$A$ with strong correlations, {\color{black}including various examples used for modelling in materials science~\cite{Torquato-02}.}
This is paralleled by our companion works~\cite{DG2,DG1} on first-order functional inequalities.

\medskip

We specifically consider random fields $A$ on $\R^d$ that can be constructed as deformations $A=\Phi(A_0)$ of (typically higher-dimensional) random fields $A_0$ that are known to satisfy a standard second-order Poincar\'e inequality.
For a $\sigma(\Aa)$-measurable random variable $Z(\Aa)$,
we can write $Z(A)=X\circ \Phi(A_0)$
and appeal to the second-order Poincar\'e inequality wrt $A_0$ to estimate the distance of $Z(A)$ to normality.
However, even though the map $A\mapsto Z(A)$ is well-controlled, the map $A_0\mapsto X\circ\Phi(A_0)$ might be
much more intricate if $\Phi$ is a complicated object (e.g.~the graphical construction to pass from the Poisson point process on $\R^d\times \R_+$ to the random parking measure on $\R^d$, cf.~\cite{Penrose-01}).
We aim at devising a proxy for a chain-rule and deriving a suitable second-order Poincar\'e inequality wrt $A$ from its known counterpart wrt $A_0$.
Following the strategy of the companion article~\cite{DG2}, we are then led to multiscale weighted versions of second-order Poincar\'e inequalities, where the local dependence wrt $A$ is considered on all scales and suitably weighted. The strategy and the main general results
are presented in Section~\ref{sec:alaChatt}.

\medskip

Before we turn to applications, we briefly comment on the arguments of Section~\ref{sec:alaChatt} to establish multiscale second-order functional inequalities.
As in~\cite{DG2}, two prototypical classes of examples are distinguished:
\begin{itemize}
\item {\it Deterministic localization}, that is, when the random field $A=\Phi(A_0)$ is a deterministic convolution $\Phi$ of a product structure $A_0$, so that the ``dependence pattern'' is prescribed deterministically a priori. This mainly concerns Gaussian fields, which can indeed be seen as convolutions of a white noise and which have been extensively studied in the literature in the framework of Malliavin calculus.
The multiscale second-order Poincar\'e inequalities that are obtained here constitute an alternative formulation of such Malliavin results~\cite{NPR-09,Nourdin-Peccati-12,Vidotto}
and involve slightly more elementary objects (Fr\'echet derivatives wrt $A$ instead of Malliavin derivatives), thus making it easier to apply.
\smallskip\item
{\it Random localization}, that is, when the ``dependence pattern'' of $A=\Phi(A_0)$ is encoded in the underlying product structure $A_0$ itself, hence varies with the realization.
More precisely, we shall discuss a few typical examples that can be viewed as transformations of a Poisson point process: random Poisson inclusions with i.i.d.\@ random radii, random Poisson tessellations, and the celebrated random parking process.
In order to control and quantify the nonlocality of the transformation~$\Phi$, we exploit its stabilization properties, drawing inspiration from the works by Penrose and Yukich~\cite{Penrose-01,Penrose-Yukich-02,PY-05}. As in the companion article~\cite{DG2}, stabilization is efficiently formulated in terms of an ``action radius''.
Our results naturally compare to~\cite{LPS-16}, where a general strategy was independently developed to prove approximate normality results for functionals of Poisson processes based on stabilization: in the present contribution, stabilization properties are confined to the proof of multiscale second-order Poincar\'e inequalities and the focus is rather on these functional inequalities in their own right --- which can then be applied without further mention of stabilization issues.
\end{itemize}

\medskip
In Section~\ref{chap:appl}, we use multiscale second-order Poincaré inequalities to establish quantitative CLTs for (linear) spatial averages of the random field~$A$. Although these functional inequalities are designed to address general {\it nonlinear} functions of~$A$, even this application to \emph{linear} random variables is nontrivial and is particularly relevant in two contexts: the analysis of fluctuations in stochastic homogenization and in stochastic geometry.
On the one hand, in the context of quantitative stochastic homogenization for random linear elliptic operators in divergence form (that is, operators of the form $-\nabla\cdot A\nabla$ with a random coefficient field $A$), various quantities of interest are proved to behave essentially like spatial averages of the random field, and applying second-order Poincar\'e inequalities then leads to sharp normal approximation results~\cite{Nolen-13,GN,GuM,MN,DGO1,DGO2,DFG}.
This contribution provides functional-analytic
tools to extend quantitative homogenization results to this setting.
On the other hand, we believe that the ideas developed in this contribution could be useful to revisit and extend various quantitative CLT results in stochastic geometry (e.g.~\cite{Penrose-05,PY-05,HLS-16}), possibly improving on convergence rates and also allowing to replace underlying Poisson point processes by more correlated processes.
As a first illustration, we establish a quantitative CLT with optimal rate for spatial averages of a Poisson inclusion model with random radii, which improves on previous results for Boolean models~\cite{Heinrich-05,HLS-16}.
As a second example, we turn to the jamming limit for RSA models (cf.~\cite{Penrose-Yukich-02}, and the subsequent works~\cite{Penrose-01,Penrose-Yukich-02,Penrose-05,PY-05,Schreiber-Penrose-Yukich-07,LPS-16}), which we revisit and complete  by using multiscale first- and second-order functional inequalities.

\medskip

In principle, all the results that can be proved using multiscale second-order Poincar\'e inequalities could be proved using the approaches of~\cite{NPR-09} or~\cite{LPS-16}. As made clear in this introduction, the merit of this contribution is  twofold.
First, it shows that the multiscale weighted form of first-order functional inequalities introduced and studied in~\cite{DG2,DG1} is also meaningful in the context of second-order inequalities.
Second, it provides an original intrinsic formulation of second-order Poincar\'e inequalities for correlated fields $A=\Phi(A_0)$ that is oblivious of the ``hidden'' product structure $A_0$ 
and is only formulated in terms of ``derivatives'' wrt $A$. As a consequence, when applicable, such inequalities allow to estimate distance to normality in a more straightforward and transparent way than other methods.}

\bigskip

{\bf Notation.} 
\begin{itemize}
\item $d$ is the dimension of the ambient space $\R^d$;
\item $C$ denotes various positive constants that only depend on the dimension $d$ and possibly on other controlled quantities; we write $\lesssim$ and $\gtrsim$ for $\le$ and $\ge$ up to such multiplicative constants $C$; we use the notation $\simeq$ if both relations $\lesssim$ and $\gtrsim$ hold; we add a subscript in order to indicate the dependence of the multiplicative constants on other parameters;
\item $Q^k:=[-1/2,1/2)^k$ denotes the unit cube centered at $0$ in dimension $k$,
and for all  $x\in\R^k$ and $r>0$ we set $Q^k(x):=x+Q^k$, $Q^k_r:=rQ^k$ and $Q^k_r(x):=x+rQ^k$; when $k=d$ or when there is no confusion possible on the meant dimension, we drop the superscript $k$;
\item we use similar notation for balls, replacing $Q^k$ by $B^k$ (the unit ball in dimension~$k$);
\item the Euclidean distance between  subsets of $\R^d$ is denoted by $d(\cdot,\cdot)$;
\item $\B(\R^k)$ denotes the Borel $\sigma$-algebra on $\R^k$;
\item for all Borel sets $S\subset \R^d$ with finite measure, $\fint_S$ denotes the averaged Lebesgue integral on $S$;
\item $\expec{\cdot}$ denotes the expectation, $\var{\cdot}$ the variance, and $\cov{\cdot}{\cdot}$ the covariance in the underlying probability space $(\Omega,\A,\p)$,
and the notation $\expec{\cdot \| \cdot}$ stands for the conditional expectation;
\item for all $a,b\in \R$, we set $a\wedge b:=\min\{a,b\}$, $a\vee b:=\max\{a,b\}$, and $a_+:=a\vee0$;
\item $\mathcal N$ denotes a standard normal random variable;
\item $\dTV{\cdot}{\cdot}$, $\dW(\cdot,\cdot)$, and  $\dK(\cdot,\cdot)$ denote the total variation, the $1$-Wasserstein, and the Kolmogorov distances, respectively.
\end{itemize}

\section{Multiscale second-order Poincar\'e inequalities}\label{sec:alaChatt}

Chatterjee's standard second-order Poincar\'e inequalities are known to hold
for product structures in Wasserstein and Kolmogorov distances~\cite{Chat08,Lachieze-Peccati-16}.
Based on these results, we prove the validity of multiscale versions of such functional inequalities 
for random fields that are deformations of product structures.
To this aim, we first recall the constructive approach of the companion article~\cite{DG2} for multiscale first-order functional inequalities and then turn to the two prototypical classes of deformations: deterministic localization (which essentially concerns Gaussian fields) and random localization (in which case the nonlocality is controlled in terms of an action radius, cf.~\cite{DG2}).

\subsection{Multiscale first-order functional inequalities}

Let $A:\R^d\times\Omega\to\R$ be a jointly measurable random field on $\R^d$, constructed on a probability space $(\Omega,\A,\p)$.
In this continuum setting, we start by recalling two notions of ``derivatives'' wrt the random field~$A$, which measure the sensitivity upon local restrictions of $A$.
\begin{itemize}
\item The {\it oscillation} $\osc$ is formally defined for any Borel set $S\subset \R^d$ by
\begin{multline}
\hspace{1cm}\oscd{A,S}Z(A)~:=~\supessd{A,S}Z(A)-\infessd{A,S}Z(A)\\
=~ \supess\left\{Z(A'):A'\in\Mes(\R^d;\R),\,A'|_{\R^d\setminus S}=A|_{\R^d\setminus S}\right\} \\
-\infess\left\{Z(A'):A'\in\Mes(\R^d;\R),\,A'|_{\R^d\setminus S}=A|_{\R^d\setminus S}\right\},\label{e.def-Glauber-formal}
\end{multline}
where the essential supremum and infimum are taken wrt the measure induced by the field $A$ on the space $\Mes(\R^d;\R)$ (endowed with the cylindrical $\sigma$-algebra). We refer to~\cite{DG2} for more careful definitions.
\smallskip\item The (integrated) {\it functional derivative} $\parfct{}$ is defined as follows.
Choose an open set $M\subset \Ld^\infty(\R^d)$ containing the realizations of the random field $A$. Given a $\sigma(A)$-measurable random variable $Z(A)$ and given an extension $\tilde Z:M\to\R$, its Fr\'echet derivative $\frac{\partial\tilde Z(A)}{\partial A}\in\Ld^1_\loc(\R^d)$ is defined for all compactly supported perturbation~$B\in\Ld^\infty(\R^d)$ by
\begin{align}\label{eq:def-frech}
\qquad\lim_{t\to0}\frac{\tilde Z(A+tB)-\tilde Z(A)}t=\int_{\R^d}B(x)\frac{\partial\tilde Z(A)}{\partial A}(x)\,dx,
\end{align}
if the limit exists. {\color{black}(The extension $\tilde Z$ is only needed to make sure that quantities like $\tilde Z(A+tB)$ make sense for small $t$, while $Z$ is a priori only defined on realizations of $A$. In the sequel we will always assume that such an extension is implicitly given; this is typically the case  in applications in stochastic homogenization.)}
Since we are interested in the local averages of this derivative, we rather define for all bounded Borel subsets $S\subset\R^d$,
\begin{align*}
\qquad&\partial_{A,S}^{\operatorname{fct}}Z(A)=\int_S\Big|\frac{\partial\tilde Z(A)}{\partial A}(x)\Big|dx,
\end{align*}
which is alternatively characterized by
\begin{align*}
\hspace{1.2cm}\parfct{A,S} Z(A)
\,=\,\sup\Big\{ \limsup_{t\downarrow 0} \frac{\tilde Z(A+t B)-\tilde Z(A)}{t} \,:\, \supp{B}\subset S,\,\sup |B|\le 1\Big\}.
\end{align*}
This derivative is additive wrt the set $S$: for all disjoint Borel subsets $S_1,S_2\subset\R^d$ we have $\partial_{A,S_1\cup S_2}^{\operatorname{fct}}Z(A)=\partial_{A,S_1}^{\operatorname{fct}}Z(A)+\partial_{A,S_2}^{\operatorname{fct}}Z(A)$.
\end{itemize}

Henceforth we use $\tilde\partial$ to denote either $\parfct{}$ or $\osc$. We now recall the form of the multiscale Poincaré inequality introduced in~\cite{DG2}.

\begin{defin}\label{defi:WFI}
Given an integrable function $\pi:\R_+\to\R_+$, we say that $A$ satisfies the {\it multiscale Poincaré} (or spectral gap) {\it inequality \emph{($\tilde \partial$-WSG)} with weight $\pi$} if for all $\sigma(A)$-measurable random variables $Z(A)$ we have
\begin{equation}\label{eq:wsg1}
\var{Z(A)}\le \,\expec{\int_0^\infty \int_{\R^d}\Big(\tilde\partial_{A,B_{\ell+1}(x)}Z(A)\Big)^{2}dx\,(\ell+1)^{-d}\pi(\ell)\,d\ell}.\qedhere
\end{equation}
\end{defin}

This functional inequality reduces to the Poincaré inequality when the weight $\pi$ is compactly supported.
When $\pi$ is not compactly supported, the above takes into account the dependence of $Z(A)$ upon local restrictions of $A$ on arbitrarily large sets (with a suitably decaying weight), thus encoding the dependence pattern of $A$.
For instance, in the specific example of a random Poisson tessellation on $\R^d$, the multiscale inequality~\eqref{eq:wsg1} has been shown to hold
with $\tilde \partial=\osc$ and with weight $\pi(\ell)=Ce^{-\frac1C\ell^d}$ (cf.~\cite{DG2}), where the weight $\pi(\ell)$ corresponds to the probability for an
element of the tessellation to have diameter $>\ell$.

In the companion article~\cite{DG2}, we have developed a general constructive approach to such functional inequalities in the case when the random field $A$ is a deformation of a product structure.
More precisely, let the random field $A$ on $\R^d$ be $\sigma(\X)$-measurable for some random field $\X$ defined on some measure space $E$ and with values in some measurable space $M$. Assume that we have a partition $E=\biguplus_{x\in\Z^d,t\in\Z^l}E_{x,t}$ on which $\X$ is {\it completely independent}, that is, the restrictions $(\X|_{E_{x,t}})_{x\in\Z^d,t\in\Z^l}$ are all independent.
The random field $\X$ can be e.g.\@ a random field on $\R^d\times\R^l$ with values in some measure space (choosing $E=\R^d\times\R^l$, $E_{x,t}=Q^d(x)\times Q^l(t)$, and $M$ the space of values), or a random point process or a random measure on $\R^d\times\R^l\times E'$ for some measure space $E'$ (choosing $E=\Z^d\times\Z^l\times E'$, $E_{x,t}=\{x\}\times\{t\}\times E'$, and $M$ the space of measures on $Q^d\times Q^l\times E'$).

Let $\X'$ be some given i.i.d.\@ copy of $\X$. For all $x,t$, we define a perturbed random field $\X^{x,t}$ by setting $\X^{x,t}|_{E\setminus E_{x,t}}=\X|_{E\setminus E_{x,t}}$ and $\X^{x,t}|_{E_{x,t}}=\X'|_{E_{x,t}}$. By complete independence, the random fields $\X$ and $\X^{x,t}$ (resp.\@ $A=A(\X)$ and $A(\X^{x,t})$)  have the same law.
The following standard discrete version of the Poincaré inequality is known as the Efron-Stein inequality~\cite{Efron-Stein-81,Steele-86} (see also~\cite[Proposition~2.4]{DG2}).
\begin{prop}\label{prop:sgindep}
For all $\sigma(\X)$-measurable random variables $Z(\X)$, we have
\begin{gather*}
\var{Z(\X)}\le\frac12\,\sum_{x\in\Z^d}\sum_{t\in\Z^l}\expec{\big(Z(\X)-Z(\X^{x,t})\big)^2}.
\qedhere
\end{gather*}
\end{prop}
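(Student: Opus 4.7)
The statement is the classical Efron--Stein inequality for the countable family of independent blocks $(\X|_{E_{x,t}})_{(x,t)\in\Z^d\times\Z^l}$. My plan is to prove it in two stages: first a tensorization of the variance against conditional variances with respect to all-but-one $\sigma$-algebras, and then the elementary identity $\var{Y}=\tfrac12\expec{(Y-Y')^2}$ valid for any i.i.d.\ copies $Y,Y'$.

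I will enumerate the countable index set $\Z^d\times\Z^l$ as $(i_k)_{k\ge 1}$, set $\X_k:=\X|_{E_{i_k}}$, and introduce $\F_{\hat k}:=\sigma(\X_j:j\ne k)$. The first goal is to establish the tensorization bound
$$\var{Z(\X)}\,\le\,\sum_{k\ge 1}\expec{\var{Z(\X)\,\|\,\F_{\hat k}}}.$$
The cleanest route is via the Hoeffding/ANOVA decomposition $Z(\X)-\expec{Z(\X)}=\sum_{\emptyset\ne S\subset\N}Z_S$, where each $Z_S$ depends only on $(\X_i)_{i\in S}$ and satisfies $\expec{Z_S\,\|\,\X_j}=0$ for every $j\in S$. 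Orthogonality in $L^2$ then yields both $\var{Z(\X)}=\sum_{S\ne\emptyset}\expec{Z_S^2}$ and $\expec{\var{Z(\X)\,\|\,\F_{\hat k}}}=\sum_{S\ni k}\expec{Z_S^2}$, whence
$$\sum_{k\ge 1}\expec{\var{Z(\X)\,\|\,\F_{\hat k}}}=\sum_{S\ne\emptyset}|S|\,\expec{Z_S^2}\ge\var{Z(\X)}.$$
An equivalent route is a martingale decomposition along the filtration $\F_k:=\sigma(\X_1,\ldots,\X_k)$ together with conditional Jensen, which amounts to the same orthogonality count.

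For the second stage, I would observe that conditionally on $\F_{\hat k}$, the block $\X_k$ and its prescribed i.i.d.\ copy $\X'|_{E_{i_k}}$ remain i.i.d., so that $Z(\X)$ and $Z(\X^{i_k})$ are i.i.d.\ under $\expec{\cdot\,\|\,\F_{\hat k}}$. Applying the elementary identity conditionally and averaging yields
$$\expec{\var{Z(\X)\,\|\,\F_{\hat k}}}\,=\,\tfrac12\,\expec{\big(Z(\X)-Z(\X^{i_k})\big)^2}.$$
Inserting this into the tensorization bound and renaming $i_k=(x,t)$ produces the claimed inequality.

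The main subtlety lies in the tensorization step in the countable setting, where one must justify $L^2$-convergence of the ANOVA series; this is routine for $Z(\X)\in L^2$. If one prefers to avoid this, the finite-partition version can be proved by induction on the number of blocks via the law of total variance, and then extended by monotone convergence along an exhausting sequence of finite subsets of $\Z^d\times\Z^l$.
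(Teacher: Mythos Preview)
Your argument is correct and is a standard route to the Efron--Stein inequality: tensorization of variance via the Hoeffding/ANOVA decomposition (or equivalently the martingale decomposition along the natural filtration), followed by the conditional identity $\var{Y\mid\F_{\hat k}}=\tfrac12\,\expec{(Y-Y')^2\mid\F_{\hat k}}$ for an independent resampling of the $k$-th block. The remark about extending from finitely many blocks to the countable case by monotone convergence (or by $L^2$-convergence of the ANOVA series) is the right way to handle the infinite index set.

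There is nothing to compare against in the paper itself: the proposition is stated there without proof, merely as a citation of the classical Efron--Stein result~\cite{Efron-Stein-81,Steele-86} and of~\cite[Proposition~2.4]{DG2}. Your write-up supplies exactly the kind of self-contained justification one would give if a proof were required.
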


In~\cite{DG2}, we describe general situations for which the above standard Poincaré inequality for the ``hidden'' product structure $\X$ is deformed into a multiscale functional inequality of the form~\eqref{eq:wsg1} for the random field $A$.
 As recalled in the introduction, two main situations need to be distinguished for the deformation: deterministic localization (which mainly concerns Gaussian fields) and random localization (in which case the nonlocality of the deformation depends on the realization).
The same distinction is needed when we turn to second-order Poincaré inequalities.
In Section~\ref{sec:ran-loc}
we start with the more original case of random localization and then quickly
address the case of Gaussian fields in Section~\ref{sec:det-loc}.

\subsection{Random localization}\label{sec:ran-loc}

Let $A$ be a $\sigma(\X)$-measurable random field on $\R^d$, where~$\X$ is completely independent on some measure space $E=\biguplus_{x\in\Z^d,t\in\Z^l}E_{x,t}$ with values in some measurable space $M$.
We address the case of random localization.
In this context, we first recall the crucial notion of {\it action radius} (cf.~\cite{DG2}), which is a probabilistic measure of the nonlocality of the dependence pattern. It is inspired by the stabilization radius first introduced by Lee~\cite{Lee-97,Lee-99} and crucially used in the works by Penrose, Schreiber, and Yukich on RSA processes~\cite{Penrose-Yukich-02,Penrose-05,PY-05,Schreiber-Penrose-Yukich-07}.
\begin{defin}\label{def:ar}
Given an i.i.d.\@ copy $\X'$ of the field $\X$, an {\it action radius for $A$ wrt $\X$ on $E_{x,t}$} (with reference perturbation $\X'$), if it exists, is defined as a nonnegative $\sigma(\X,\X')$-measurable random variable $\rho$ such that we have a.s.,
\[\left.A(\X^{x,t})\right|_{\R^d\setminus (Q(x)+B_\rho)}=\left.A(\X)\right|_{\R^d\setminus(Q(x)+B_\rho)},\]
where we recall that the perturbed random field $\X^{x,t}$ is defined by $\X^{x,t}|_{E\setminus E_{x,t}}:=\X|_{E\setminus E_{x,t}}$ and $\X^{x,t}|_{E_{x,t}}:=\X'|_{E_{x,t}}$.
\end{defin}
The following result establishes a multiscale second-order Poincar\'e inequality for $A$, based on assumptions on a slightly stronger notion of action radius. 
In the statement we define and use discrete derivatives, which are directly controlled by the oscillation~\eqref{e.def-Glauber-formal}.
The strategy consists in applying the standard second-order Poincar\'e inequality for $\X$ due to Chatterjee~\cite{Chat08}, and then exploiting the localization properties of the action radius to devise an approximate chain rule and deduce a functional inequality wrt $A=A(\X)$ itself. As already discussed, this is to be compared with~\cite{LPS-16}.
\begingroup\allowdisplaybreaks
\begin{theor}\label{th:chat-ra}
Let $A$ be a $\sigma(\X)$-measurable random field on $\R^d$, where $\X$ is a completely independent random field on some measure space $E=\biguplus_{x\in\Z^d,t\in\Z^l}E_{x,t}$ with values in some measurable space $M$.
Let $\X'$ be an i.i.d.\@ copy of $\X$. For all $B\subset \Z^d\times\Z^l$, let the perturbed random field $\X^{B}$ be defined by
\begin{gather*}
\X^{B}|_{\cup_{(x,t)\in B}E_{x,t}}=\X'|_{\cup_{(x,t)\in B}E_{x,t}},\qquad\X^{B}|_{\cup_{(x,t)\notin B}E_{x,t}}=\X|_{\cup_{(x,t)\notin B}E_{x,t}},
\end{gather*}
and for all $x,x'\in \Z^d$ and $t,t'\in\Z^l$ we set for simplicity $\X^{x,t}:=\X^{\{(x,t)\}}$ and $\X^{x,t;x',t'}:=\X^{\{(x,t),(x',t')\}}$. Assume that
\begin{enumerate}[(a)]
\item For all $x,t$ and all $B\subset\Z^d\times\Z^l$, there exists an action radius $\rho_{x,t}(\X^B)$ for $A(\X^B)$ wrt $\X^B$ in $E_{x,t}$ with reference perturbation $\X'$ (in the sense of Definition~\ref{def:ar}), and set
\[\tilde\rho_{x,t}:=\sup\big\{\rho_{x,t}(\X^B)\,:\,B\subset\Z^d\times\Z^l\big\}.\]
\item The transformation $A$ of $\X$ is stationary, that is, the random fields $A(\X(\cdot+z,\cdot))$ and $A(\X)(\cdot+z)$ have the same law for all $z\in\Z^d$.
\end{enumerate}
For all $t\in\Z^l$ and $\ell\ge1$, define the weight
\[\pi(t,\ell):=\pr{\ell-1\le\tilde\rho_{0,t}<\ell\,,~\X\ne\X^{0,t}}.\]
Then the following results hold.
\begin{enumerate}[(i)]
\item For all $\sigma(A)$-measurable random variables~$Z=Z(A)$ with $0<\sigma^2:=\var{Z}<\infty$, we have
\begin{eqnarray}\label{eq:chat-ar-gen}
\qquad\lefteqn{\dW\big(\sigma^{-1}({Z-\expec{Z}}),\Nc\big)}\nonumber\\
& \lesssim &\sigma^{-2}\inf_{0<\lambda<1} \Bigg( \sum_{x,x',x''}\sum_{t,t',t''}\sum_{\ell,\ell',\ell''=1}^\infty \big(\pi(t,\ell)^\frac13\pi(t',\ell')^\frac13\pi(t'',\ell'')^\frac13\big)^{\lambda}\nonumber\\
&&\hspace{3cm}\times\expec{ \Big(\pardis{\ell,x,t}\pardis{\ell',x',t'}Z\Big)^\frac4{1-\lambda}}^\frac{1-\lambda}4\expec{ \Big(\pardis{\ell,x,t}\pardis{\ell'',x'',t''}Z\Big)^\frac4{1-\lambda}}^\frac{1-\lambda}4\nonumber\\
&&\hspace{4cm}\times\expec{\Big(\pardis{\ell',x',t'} Z\Big)^\frac4{1-\lambda}}^\frac{1-\lambda}4\expec{\Big(\pardis{\ell'',x'',t''} Z\Big)^\frac4{1-\lambda}}^\frac{1-\lambda}4\Bigg)^\frac12\nonumber\\
&&+\sigma^{-2}\inf_{0<\lambda<1} \Bigg( \sum_{x,x'}\sum_{t,t'}\sum_{\ell,\ell'=1}^\infty \big(\pi(t,\ell)^\frac12\pi(t',\ell')^\frac12\big)^{\lambda}\nonumber\\
&&\hspace{3cm}\times\expec{ \Big(\pardis{\ell,x,t}\pardis{\ell',x',t'}Z\Big)^\frac4{1-\lambda}}^\frac{1-\lambda}2 \expec{\Big(\pardis{\ell',x',t'} Z\Big)^\frac4{1-\lambda}}^\frac{1-\lambda}2\Bigg)^\frac12\nonumber\\
&&+\sigma^{-2}\inf_{0<\lambda<1} \Bigg( \sum_{x}\sum_t\sum_{\ell=1}^\infty \pi(t,\ell)^{\lambda}\expec{ (\pardis{\ell,x,t}Z)^\frac4{1-\lambda}}^{1-\lambda}\Bigg)^\frac12\nonumber\\
&&+\sigma^{-3} \inf_{0<\lambda<1} \sum_{x}\sum_t\sum_{\ell=1}^\infty \pi(t,\ell)^{\lambda} \expec{\Big(\pardis{\ell,x,t}Z\Big)^{\frac3{1-\lambda}}}^{1-\lambda},
\end{eqnarray}
where the sums in $x,x',x''$ (resp.\@ in $t,t',t''$) implicitly run over $\Z^d$ (resp.\@ over $\Z^l$), and where for all $x\in\Z^d$ and $t\in\Z^l$ we have defined the discrete derivative
\[\pardis{\ell,x,t}Z:=\big(Z(A)-Z(A(\X^{x,t}))\big)\mathds1_{A(\X^{x,t})|_{\R^d\setminus Q_{2\ell+1}(x)}=A|_{\R^d\setminus Q_{2\ell+1}(x)}}\]
and the discrete second derivative
\begin{multline*}
\qquad\pardis{\ell,x,t} \pardis{\ell',x',t'} X:=\big(Z(A)-Z(A(\X^{x,t}))-Z(A(\X^{x',t'}))+Z(A(\X^{x,t;x',t'}))\big)\\
\times \mathds1_{A(\X^{x,t})|_{\R^d\setminus Q_{2\ell+1}(x)}=A|_{\R^d\setminus Q_{2\ell+1}(x)}} \mathds1_{A(\X^{x,t;x',t'})|_{\R^d\setminus Q_{2\ell+1}(x)}=A(\X^{x',t'})|_{\R^d\setminus Q_{2\ell+1}(x)}}\\
\times \mathds1_{A(\X^{x',t'})|_{\R^d\setminus Q_{2\ell'+1}(x')}=A|_{\R^d\setminus Q_{2\ell'+1}(x')}} \mathds1_{A(\X^{x,t;x',t'})|_{\R^d\setminus Q_{2\ell'+1}(x')}=A(\X^{x,t})|_{\R^d\setminus Q_{2\ell'+1}(x')}}.
\end{multline*}
\item For all $\sigma(A)$-measurable random variables $Z=Z(A)$ with $0<\sigma^2:=\var{Z}<\infty$, we have
\begin{equation}\label{eq:chat-ar-gen-Kol}
\qquad\dK\big(\sigma^{-1}({Z-\expec{Z}}),\Nc\big)\lesssim\operatorname{RHS}_{\eqref{eq:chat-ar-gen}}(Z)+G_1(Z),
\end{equation}
where $\operatorname{RHS}_{\eqref{eq:chat-ar-gen}}(Z)$ denotes the RHS of~\eqref{eq:chat-ar-gen}, and where we have set
\begin{equation*}
G_1(Z)\,:=\,\sigma^{-3} \inf_{0<\lambda<1}\sum_{x}\sum_t\bigg(\sum_{\ell=1}^\infty \pi(t,\ell)^\lambda\,\expec{\Big(\pardis{\ell,x,t}Z\Big)^{\frac6{1-\lambda}}}^{1-\lambda}\bigg)^\frac12.
\end{equation*}
\end{enumerate}
If in addition for all $x,t$ there exists a $\sigma(\X|_{E_{x,t}},\X'|_{E_{x,t}})$-measurable action radius $\rho_{x,t}$ for $A(\X)$ wrt $\X$ on $E_{x,t}$, then we simply have $\tilde\rho_{x,t}=\rho_{x,t}$ for all $x,t$, the weights $\pi^\frac13$ and $\pi^\frac12$ can both be replaced by $\pi$ in the first two RHS terms of~\eqref{eq:chat-ar-gen} and in the corresponding terms in $\operatorname{RHS}_{\eqref{eq:chat-ar-gen}}(Z)$ in~\eqref{eq:chat-ar-gen-Kol}, and the term $G_1(Z)$ in \eqref{eq:chat-ar-gen-Kol} can be replaced by
\begin{equation*}
G_2(Z)\,:=\,\sigma^{-3}\inf_{0<\lambda<1}\sum_{x}\sum_t \sum_{\ell=1}^\infty \pi(t,\ell)^\lambda\,\expec{\Big(\pardis{\ell,x,t}Z\Big)^{\frac6{1-\lambda}}}^{\frac{1-\lambda}{2}}.
\qedhere
\end{equation*}
\end{theor}
\begin{rem}
The additional term $G_1(Z)$ in~\eqref{eq:chat-ar-gen-Kol} typically dominates the RHS terms of~\eqref{eq:chat-ar-gen}. However they become of the same order if the weight $\pi$ is super-algebraically decaying, or if the improved form of the above result holds (that is, with $G_1(Z)$ replaced by $G_2(Z)$). In each of the examples considered below, we are in one of these two situations, hence the above bounds on the Kolmogorov and Wasserstein distances essentially coincide.
Otherwise, it might be advantageous to rather bound the Kolmogorov distance by the square-root of the Wasserstein distance and use the above estimate for the latter.
\end{rem}

Before we turn to the proof of~Theorem~\ref{th:chat-ra}, we recall some representative examples analyzed in \cite[Section~3]{DG2} to which it applies.
In each case, we briefly discuss the existence and properties of the action radius $\tilde \rho$, which is a slightly stronger notion of action radius than the one $\rho$ given in Definition~\ref{def:ar} and needed for first-order functional inequalities. For technical details we refer the reader to~\cite[Section~3]{DG2}, where the action radii $\rho$ are constructed.
\begin{enumerate}[(A)]
\item \emph{Poisson random inclusion model.} Consider a Poisson point process $\Pc$ of unit intensity on $\R^d$. For each Poisson point $x\in \Pc$ consider a random radius $r(x)$ (independent of the radii of other points and identically distributed according to some given law $\nu$ on~$\R_+$), and define the inclusion $C_x:=B_{r(x)}(x)$.
Consider the inclusion set $\mathcal I:=\cup_{x\in \Pc} C_x$,
let $A_0,A_1\in \R$ be given values, and define a random field $A$ on $\R^d$ by 
\begin{align*}
A(x)\,:=\,A_0\mathds1_{x\notin \mathcal I}+A_1 \mathds1_{x\in \mathcal I},
\end{align*}
that is, $A$ takes value $A_1$ in the inclusions and $A_0$ outside.
As argued in \cite[Section~3.5]{DG2}, $A$ can be reformulated in the form addressed in Theorem~\ref{th:chat-ra} above with $l=1$, and for all $x,t$ there exists a $\sigma(\X|_{E_{x,t}},\X'|_{E_{x,t}})$-measurable action radius $\rho_{x,t}:=t\,\mathds1_{\X\ne\X^{x,t}}$ (cf.~\cite[proof of Proposition~3.6(i)]{DG2}). The improved form of the above result therefore holds with
\[\pi(t,\ell):=\mathds1_{\ell-1\le t<\ell}\,\pr{\X\ne\X^{0,t}}\le 2\,\nu([t-\tfrac12,t+\tfrac12))\,\mathds1_{\ell-1\le t<\ell}.\]
\item \emph{Random parking process.} Consider the random parking point process $\mathcal R$ with unit radius on $\R^d$ (see Section~\ref{sec:RSA} below for a precise construction based on an underlying Poisson point process $\Pc_0$ of unit intensity on $\R^d\times \R_+$).
As above, for all $x\in \mathcal R$ we denote by $C_x:=B(x)$ the unit spherical inclusion centered at $x$ (so that by definition of $\mathcal R$ all the inclusions are disjoint), we consider the inclusion set $\mathcal I:=\cup_{x\in\mathcal R}C_x$, and we define a random field $A$ on $\R^d$ by
$$
A(x)\,:=\,A_0\mathds1_{x\notin\mathcal I}+A_1\mathds1_{x\in\mathcal I}.
$$
In~\cite[proof of Proposition~3.3]{DG2}, for all $x$ we have constructed an action radius $\rho_x$ wrt the underlying Poisson point process $\Pc_0$ on $Q(x)\times\R_+$.
By definition, this action radius satisfies $\rho_x(\Pc_0^B)\le \rho_x(\Pc_0\cup\Pc_0')$ for all $B\subset\Z^d$: indeed, adding points in the Poisson point process $\Pc_0$ adds possible causal chains, hence increases the defined action radius. Therefore, we deduce $\tilde\rho_x\le\rho_x(\Pc_0\cup\Pc_0')$. As $\Pc_0\cup\Pc_0'$ is itself a Poisson point process on $\R^d\times\R_+$ with doubled intensity, we conclude $\pr{\tilde\rho_x\ge \ell}\le Ce^{-\frac1{C}\ell}$ as in \cite[Proposition~3.3]{DG2}, and we may apply Theorem~\ref{th:chat-ra} with $l=0$ and exponential weight $\pi(\ell)= Ce^{-\frac1{C}\ell}$.
\item \emph{Poisson random tessellations.}
Consider a Poisson point process $\Pc$ on $\R^d$, and let $\mathcal V$ denote the associated Voronoi tessellation of $\R^d$, that is, a partition of $\R^d$ into convex polyhedra $V_x\in \mathcal V$ centered at the Poisson points $x\in \Pc$.
For each point $x\in \Pc$ consider a random value $\alpha(x)$ (independent of the values at other points and identically distributed), and we define a random field $A$ on $\R^d$ by
$$
A(x) \,:=\,\sum_{y\in \Pc} \alpha(y) \mathds1_{x\in V_y}.
$$
As argued in~\cite[proof of Proposition~3.2]{DG2}, $A$ can be reformulated in the form addressed in Theorem~\ref{th:chat-ra} above with $l=0$ and with weight
\begin{align}\label{eq:bound-pi-tilderho-voro}
\pi(\ell)\le\pr{\tilde \rho_x \ge \ell-1}\le Ce^{-\frac1C \ell^d}.
\end{align}
(More precisely, we argue as follows: Denote by $C_i:=\{x\in \R^d:x_i\ge \frac56 |x|\}$, $1\le i\le d$, the $d$ cones in the canonical directions $e_i$ of $\R^d$, and consider the $2d$ cones $C_i^\pm:=\pm (2e_i+C_i)$.  For all $x$, let $\rho_x:=\rho_x^0$ denote the action radius for $A$ defined in~\cite[proof of Proposition~3.2]{DG2}, and let $\tilde\rho_x$ be defined as in the statement of Theorem~\ref{th:chat-ra} above.
By construction, the inequality $\tilde \rho_x \le C L$ holds if for each cone $C_i^\pm$ there exists a cube $Q \subset C_i^\pm \cap \{x:|x_i|\le L\}$ such that $ \Pc_0 \cap Q \ne \varnothing \ne \Pc_0'\cap Q$. By independence of $\Pc_0$ and $\Pc_0'$, and by a union bound, the claim~\eqref{eq:bound-pi-tilderho-voro} follows.)
\end{enumerate}
\begin{proof}[Proof of Theorem~\ref{th:chat-ra}]
{\color{black}Given a $\sigma(A)$-measurable random variable $Z=Z(\X)$, we consider for all $n$ its approximation $Z_n$ as the $\sigma(\X|_{\cup_{|(x,t)|\le n}E_{x,t}})$-measurable random variable $Z_n:=\expeCm{Z}{\X|_{\cup_{|(x,t)|\le n}E_{x,t}}}$.
On the one hand, as $Z_n\to Z$ in $\Ld^2(\Omega)$, we find $\liminf_n\dW(\sigma^{-1}({Z_n-\expec{Z_n}}),\Nc)\ge\dW(\sigma^{-1}({Z-\expec{Z}}),\Nc)$. On the other hand, it is easily checked that
\[\expecm{(\pardis{\ell,x,t} Z_n)^p}\le \expecm{(\pardis{\ell,x,t} Z)^p},\qquad\expecm{(\pardis{\ell,x,t}\pardis{\ell',x',t'} Z_n)^p}\le \expecm{(\pardis{\ell,x,t}\pardis{\ell',x',t'} Z)^p}.\]
Therefore, it suffices to establish the result for all approximations $Z_n$'s uniformly in $n$, while the conclusion follows by approximation.
Let an arbitrary finite set $F\subset\Z^d\times\Z^l$ be fixed and consider a $\sigma(\X|_{\cup_{(x,t)\in F}E_{x,t}})$-measurable random variable $Z=Z(\X)$.
We split the proof into two steps.
In view of the stationarity assumption~(b), the law of the action radius $\rho_{x,t}(\X^B)$ can be chosen independent of $x$, hence similarly for $\tilde\rho_{x,t}$.
}

\medskip

\step1 Application of a result by Chatterjee.\\
By~\cite[Theorem~2.2]{Chat08} (together with the standard spectral gap of Proposition~\ref{prop:sgindep}), we have
\begin{multline}\label{e:Chatterjee-refZ}
\dW\big(\sigma^{-1}({Z-\expec{Z}}),\Nc\big)\lesssim\sigma^{-3}\sum_{x,t}\expec{|\Delta_{x,t}Z|^3}\\
+\sigma^{-2}\bigg(\sum_{x,t}\E\bigg[\Big|\sum_{x',t'}(\Delta_{x,t}\Delta_{x',t'}Z)\overline{\Delta_{x',t'}Z}\Big|^2\bigg]\bigg)^\frac12\\
+\sigma^{-2}\bigg(\sum_{x,t}\E\bigg[\Big|\sum_{x',t'}(\Delta_{x,t}\overline{\Delta_{x',t'}Z}){\Delta_{x',t'}Z}\Big|^2\bigg]\bigg)^\frac12,
\end{multline}
where the sums in $(x,t)$ and $(x',t')$ implicitly run over the finite set $F$, and where we have set 
\begin{gather*}
\Delta_{x,t}Z(\X^B):=Z(\X^B)-Z(\X^{B\cup\{(x,t)\}}),\\
\overline{\Delta_{x,t}Z}:=\sum_{B\subset F\atop (x,t)\notin B}K_B{\Delta_{x,t} Z(\X^B)},\qquad K_B:=\frac{|B|!(|F|-|B|-1)!}{|F|!}.
\end{gather*}
Note that by definition $\sum_{B\subset F: (x,t)\notin B}K_B=1$.
By~\cite[Theorem~4.2]{Lachieze-Peccati-16} (together with the standard spectral gap of Proposition~\ref{prop:sgindep}), the following estimate on the Kolmogorov distance also holds
\begin{multline}\label{e:Chatterjee-Kol-refZ}
\dK\bigg(\frac{Z-\expec{Z}}{\sqrt{\var{Z}}},\Nc\bigg)\lesssim \operatorname{RHS}(Z)+\sigma^{-3}\E\bigg[\Big(\sum_{x,t}|\Delta_{x,t}Z|^2\,\overline{\overline{\Delta_{x,t}Z}}\Big)^2\bigg]^\frac12\\
+\sigma^{-2}\bigg(\sum_{x,t}\E\bigg[\Big|\sum_{x',t'}(\Delta_{x,t}\Delta_{x',t'}Z)\overline{\overline{\Delta_{x',t'}Z}}\Big|^2\bigg]\bigg)^\frac12\\
+\sigma^{-2}\bigg(\sum_{x,t}\E\bigg[\Big|\sum_{x',t'}(\Delta_{x,t}\overline{\overline{\Delta_{x',t'}Z}}){\Delta_{x',t'}Z}\Big|^2\bigg]\bigg)^\frac12,
\end{multline}
where $\operatorname{RHS}(Z)$ stands for the RHS of~\eqref{e:Chatterjee-refZ} above, and
\[\overline{\overline{\Delta_{x,t}Z}}:=\sum_{B\subset F\atop (x,t)\notin B}K_B{|\Delta_{x,t} Z(\X^B)|}.\]

\medskip

\step2 Conditioning wrt the action radius.\\
In this step we reformulate the RHSs of \eqref{e:Chatterjee-refZ} and~\eqref{e:Chatterjee-Kol-refZ} by introducing the action radius $\rho_{x,t}$ for $A$ wrt $\X$. We only address the second RHS term in~\eqref{e:Chatterjee-refZ} since 
all the other terms can be treated similarly. To simplify notation, we write $z:=(x,t)$ and $E_z:=E_{x,t}$.
We start by expanding the square and by distinguishing cases when the differences $\Delta_z$ are taken at the same points,
\begin{eqnarray}\label{eq:pre-chat-decZ}
\lefteqn{\sum_{z}\expec{\bigg|\sum_{z'}(\Delta_z\Delta_{z'}Z)\overline{\Delta_{z'}Z}\bigg|^2}}\nonumber\\
&\le&\sum_{z,z',z''}\expec{|\Delta_z\Delta_{z'}Z||\Delta_z\Delta_{z''}Z||\overline{\Delta_{z'}Z}||\overline{\Delta_{z''}Z}|}\nonumber\\
&=&\sum_{z}\expec{|\Delta_zZ|^2|\overline{\Delta_{z}Z}|^2}
+2\sum_{z\ne z'}\expec{|\Delta_z\Delta_{z'}Z||\Delta_zZ||\overline{\Delta_zZ}||\overline{\Delta_{z'}Z}|}\\
&&+\sum_{z\ne z'} \expec{|\Delta_z\Delta_{z'}Z|^2|\overline{\Delta_{z'}Z}|^2}+\sum_{z,z',z''\atop\text{distinct}} \expec{|\Delta_z\Delta_{z'}Z||\Delta_z\Delta_{z''}Z||\overline{\Delta_{z'}Z}||\overline{\Delta_{z''}Z}|},\nonumber
\end{eqnarray}
where we used the fact that $\Delta_z\Delta_z Z=\Delta_zZ$. We then reformulate the four RHS terms by introducing the action radius.
We only treat the last term in detail (the other terms are similar). Since the product $|\Delta_z\Delta_{z'}Z||\Delta_z\Delta_{z''}Z||\overline{\Delta_{z'}Z}||\overline{\Delta_{z''}Z}|$ vanishes whenever $\X|_{E_z}=\X'|_{E_z}$ or $\X|_{E_{z'}}=\X'|_{E_{z'}}$ or $\X|_{E_{z''}}=\X'|_{E_{z''}}$, we obtain after conditioning wrt the values of $\tilde\rho_{z}$, $\tilde\rho_{z'}$ and $\tilde\rho_{z''}$ (that is, the stronger notion of action radii defined in the statement),
\begin{multline*}
\sum_{z,z',z''\atop\text{distinct}}\expec{|\Delta_z\Delta_{z'}Z||\Delta_z\Delta_{z''}Z||\overline{\Delta_{z'}Z}||\overline{\Delta_{z''}Z}|}\\
\le\sum_{\ell,\ell',\ell''=1}^\infty\sum_{z,z',z''\atop\text{distinct}} \E\Big[|\Delta_{z}\Delta_{z'}Z||\Delta_{z}\Delta_{z''}Z||\overline{\Delta_{z'}Z}||\overline{\Delta_{z''}Z}|\\
\times \mathds1_{\ell-1\le\tilde\rho_{z}<\ell}\,\mathds1_{\X|_{E_z}\ne\X'|_{E_z}}\mathds1_{\ell'-1\le\tilde\rho_{z'}<\ell'}\,\mathds1_{\X|_{E_{z'}}\ne\X'|_{E_{z'}}}\mathds1_{\ell''-1\le\tilde\rho_{z''}<\ell''}\,\mathds1_{\X|_{E_{z''}}\ne\X'|_{E_{z''}}}\Big].
\end{multline*}
Note that the event $\tilde\rho_{z}<\ell$ entails by definition $A(\X^B)|_{\R^d\setminus Q_{2\ell+1}(x)}=A(\X^{B\cup\{z\}})|_{\R^d\setminus Q_{2\ell+1}(x)}$ for all $B\subset F$.
By Hölder's inequality and by definition of $\pardis{}$ and $\pardis{}\pardis{}$, we then obtain for all $0<\lambda<1$,
\begin{multline*}
\sum_{z,z',z''\atop\text{distinct}}\expec{|\Delta_{z}\Delta_{z'}Z||\Delta_{z}\Delta_{z''}Z||\overline{\Delta_{z'}Z}||\overline{\Delta_{z''}Z}|}\le\sum_{\ell,\ell',\ell''=1}^\infty\sum_{z,z',z''\atop\text{distinct}}\sum_{B'\subset F\atop z'\notin B'}K_{B'}\sum_{B''\subset F\atop z''\notin B''}K_{B''}\\
\times\expec{\mathds1_{\ell-1\le\tilde\rho_{z}<\ell}\,\mathds1_{\X|_{E_{z}}\ne\X'|_{E_z}}\mathds1_{\ell'-1\le\tilde\rho_{z'}<\ell'}\,\mathds1_{\X|_{E_{z'}}\ne\X'|_{E_{z'}}}\mathds1_{\ell''-1\le\tilde\rho_{z''}<\ell''}\,\mathds1_{\X|_{E_{z''}}\ne\X'|_{E_{z''}}}}^\lambda\\
\times\expec{\Big(\big|\pardis{\ell,z}\pardis{\ell',z'}Z(\X)\big|\big|\pardis{\ell,z}\pardis{\ell'',z''}Z(\X)\big|\big|{\pardis{\ell',z'}Z(\X^{B'})}\big|\big|{\pardis{\ell'',z''}Z(\X^{B''})}\big|\Big)^\frac1{1-\lambda}}^{1-\lambda}.
\end{multline*}
Again applying Hölder's inequality, noting that $\sum_{B\subset F:z\notin B}K_B=1$, and recalling that $\X$ and $\X^B$ have the same law for all $B\subset F$, we conclude
\begin{multline}\label{eq:bound-lastterm-chat}
\sum_{z,z',z''\atop\text{distinct}}\expec{|\Delta_{z}\Delta_{z'}Z||\Delta_{z}\Delta_{z''}Z||\overline{\Delta_{z'}Z}||\overline{\Delta_{z''}Z}|}\\
\le\sum_{\ell,\ell',\ell''=1}^\infty\sum_{z,z',z''}\big(\pi(t,\ell)\pi(t',\ell')\pi(t'',\ell'')\big)^\frac\lambda3\,\expec{\big|\pardis{\ell,z}\pardis{\ell',z'}Z\big|^\frac{4}{1-\lambda}}^\frac{1-\lambda}4\expec{\big|\pardis{\ell,z}\pardis{\ell'',z''}Z\big|^\frac4{1-\lambda}}^\frac{1-\lambda}4\\
\times\expec{\big|{\pardis{\ell',z'}Z}\big|^\frac4{1-\lambda}}^\frac{1-\lambda}4\expec{\big|{\pardis{\ell'',z''}Z}\big|^\frac4{1-\lambda}}^\frac{1-\lambda}4.
\end{multline}
The other terms in~\eqref{eq:pre-chat-decZ} can be treated similarly, and the results~(i)--(ii) follow.
Finally note that if for all $z$ there is an action radius $\rho_{z}$ for $A$ wrt $\X$ on $E_z$ that is $\sigma(\X|_{E_z},\X'|_{E_z})$-measurable, then the complete independence of $\X$ ensures that $\tilde\rho_z$, $\tilde\rho_{z'}$ and $\tilde\rho_{z''}$ are independent for $z,z',z''$ distinct, so that we simply obtain
\begin{multline*}
\expec{\mathds1_{\ell-1\le\tilde\rho_{z}<\ell}\,\mathds1_{\X|_{E_z}\ne\X'|_{E_z}}\mathds1_{\ell'-1\le\tilde\rho_{z'}<\ell'}\,\mathds1_{\X|_{E_{z'}}\ne\X'|_{E_{z'}}}\mathds1_{\ell''-1\le\tilde\rho_{z''}<\ell''}\,\mathds1_{\X|_{E_{z''}}\ne\X'|_{E_{z''}}}}\\
=\pi(t,\ell)\pi(t',\ell')\pi(t'',\ell'').
\end{multline*}
The exponent $\frac13$ can then be removed from the weights in~\eqref{eq:bound-lastterm-chat}, and the corresponding improved result follows.
\end{proof}
\endgroup

\subsection{Deterministic localization}\label{sec:det-loc}
This section is devoted to the example of Gaussian random fields.
{\color{black}The multiscale second-order Poincaré inequality that we establish is a reformulation of the corresponding result by~\cite{NPR-09} in the context of Malliavin calculus, once the connection between functional and Malliavin derivative is established. The proof is postponed to Appendix~\ref{append-Gaus}.
Alternatively, a direct proof could be obtained by deforming the standard second-order Poincar\'e inequality for i.i.d.\@ Gaussian sequences due to Chatterjee~\cite{Chat-09}.
Before stating the result, we give a suitable definition of the iterated functional derivative~$(\parfct{})^2$:
for all bounded Borel subsets $F,F'\subset\R^d$,
\[(\parfct{A})^2_{F\times F'}Z(\Aa)\,:=\,\int_{F\times F'}\Big|\frac{\partial^2\tilde Z(A)}{\partial A^2}(x,y)\Big|\,dxdy,\]
which is alternatively characterized by
\begin{multline*}
(\parfct{A})^2_{F\times F'}Z(\Aa)\\
\,:=\,
\sup\bigg\{\limsup_{t\downarrow0}\frac{\tilde Z(\Aa+t(\bb+\bb'))-\tilde Z(\Aa+t \bb)-\tilde Z(\Aa+t \bb')+\tilde Z(\Aa)}{t^2}\,:\\
 \qquad \,\sup|\bb|,\sup|\bb'|\le1,\,\supp\bb\subset F, \supp \bb' \subset F'\bigg\}.
\end{multline*}}
{\color{black}
\begin{theor}\label{th:fctinequ}
Let $G$ be an $\R^k$-valued stationary centered Gaussian random field on $\R^d$, characterized by its covariance function $\Cc(x):=\cov{G(x)}{G(0)}$. Decompose $\Cc=\Cc_0\ast\Cc_0$ and assume that for some $0<\beta\ne d$,
\begin{align}\label{eq:ass-C0}
|\Cc_0(x)|\lesssim(1+|x|)^{-\frac12(d+\beta)}
\end{align}
(hence $|\Cc(x)|\lesssim(1+|x|)^{-\beta}$).
Let $\Aa(x):=h(G(x))$ for some $h\in C^2(\R^k)$ with $\nabla h,\nabla^2h\in\Ld^\infty(\R^k)$.
Then for all $\sigma(\Aa)$-measurable random variables~$Z=Z(\Aa)\in\Ld^2(\Omega)$ with $0<\sigma^2:=\var{X}<\infty$ we have for all $1\le p\le\frac{d}{(d-\beta)_+}$,
\begin{multline*}
\dW\big(\sigma^{-1}(Z-\expec{Z})\,,\,\Nc\big)+\dTV{ \sigma^{-1}(Z-\expec{Z})\,}{\,\Nc}\\
\,\lesssim\,\sigma^{-2}\,\expec{\3\parfct{A}Z\3_\beta^4}^\frac14\Big(\expec{\|(\parfct{A})^2X\|_{\beta,\beta}^4}^\frac14+\expec{\|\parfct{A}Z\|_p^4}^\frac14\Big),
\end{multline*}
where the multiplicative constant depends only on $\|h\|_{W^{2,\infty}}$ and on $\Cc$, and where the norms are defined by
\begin{eqnarray*}
\|\parfct{A} X\|_p^p&:=&\int_{\R^d}|\parfct{\Aa,B(z)}Z|^p\,dz,\\
\3\parfct{A} X\3_\beta^2&:=&\int_0^\infty \int_{\R^d}|\parfct{\Aa,B_{\ell+1}(z)}Z|^2\,dz\,(\ell+1)^{-d-\beta-1}\,d\ell,\\
\3(\parfct{A})^2X\3_{\beta,\beta}^2&:=&\int_0^\infty\int_0^\infty\iint_{\R^d\times \R^d} |(\parfct{A})^2_{B_{\ell+1}(z)\times B_{\ell'+1}(z')}Z|^2\,dzdz'\\
&&\hspace{4cm}\times(\ell+1)^{-d-\beta-1}\,d\ell\,(\ell'+1)^{-d-\beta-1}\,d\ell'.
\end{eqnarray*}
If the covariance function $\Cc$ is integrable (that is, $\beta>d$), the above is reduced to the following, for all $1\le p\le\infty$,
\begin{multline*}
\dW\big(\sigma^{-1}(X-\expec{X})\,,\,\Nc\big)+\dTV{ \sigma^{-1}(X-\expec{X})\,}{\,\Nc}\\
 \lesssim\, \sigma^{-2}\expec{\|\parfct{\Aa} X\|_2^4}^\frac14\Big(\expec{\|(\parfct{\Aa})^2 X\|_2^4}^\frac14+\expec{\|\parfct{\Aa} X\|_{p}^4}^\frac14\Big),
\end{multline*}
where 
\begin{equation*}
\|(\parfct{A})^2Z\|_2^2~:=~\iint_{\R^d\times \R^d} |(\parfct{A})^2_{B(z)\times B(z')}Z|^2\,dzdz'.\qedhere
\end{equation*}
\end{theor}

\begin{rem}
The decay assumption~\eqref{eq:ass-C0} formulated in terms of $\Cc_0$ can alternatively be replaced by requiring both $|\Cc(x)|\lesssim(1+|x|)^{-\beta}$ and $||\nabla|^\e\Cc(x)|\lesssim(1+|x|)^{-\beta}$ for some $\e>0$.
Indeed, we can then write
\begin{multline*}
\int_{\R^d}\sup_{B(x)}|\Cc_0\ast\xi|^2\,dx\,\lesssim\,\int_{\R^d}\big(|\Cc_0\ast\xi|^2+||\nabla|^\e\Cc_0\ast\xi|^2\big)\\
\,\lesssim\,\iint_{\R^d}\xi(x)\xi(y)\big(\Cc(x-y)+|\nabla|^{2\e}\Cc(x-y)\big)\,dxdy\,\lesssim\,\iint_{\R^d}|\xi(x)||\xi(y)|(1+|x-y|)^{-\beta}\,dxdy.
\end{multline*}
In view of the Hardy-Littlewood-Sobolev inequality, the RHS is bounded by the $\Ld^{2d/(\beta\wedge d)}$ norm of $\xi$, as used in the proof of Theorem~\ref{th:fctinequ}.
\end{rem}}


\section{Application to spatial averages and to RSA models}\label{chap:appl}

{\color{black}\subsection{Spatial averages of the random field}\label{chap:approx-norm}
In order to illustrate how multiscale second-order Poincar\'e inequalities can be used, we investigate the approximate normality of spatial averages $Z_L:=\fint_{Q_L}(A-\expec{A})$ of the random field.}
For simplicity, we focus on two examples: Gaussian processes and Poisson random inclusions with (unbounded) random radii.
{\color{black}Although for Gaussian fields this result is standard and optimal, the result for the Poisson inclusion model improves on the existing literature~\cite{Baddeley-80,Mase-82,Heinrich-05,HLS-16} (cf.~discussion below).}

\begin{prop}\label{prop:as-no}
We consider the two examples separately.
\begin{enumerate}[(i)]
\item {\color{black}Let $G$ be a stationary Gaussian random field on $\R^d$, characterized by its covariance $\Cc(x):=\cov{G(x)}{G(0)}$. Decompose $\Cc=\Cc_0\ast\Cc_0$ and assume that for some $0<\beta\ne d$,
\[|\Cc_0(x)|\lesssim(1+|x|)^{-(d+\beta)/2}\]
(hence $|\Cc(x)|\lesssim(1+|x|)^{-\beta}$).
Let $\Aa(x):=h(G(x))$ for some $h\in C^2(\R^k)$ with $\nabla h,\nabla^2h\in\Ld^\infty(\R^k)$.
Then the results in~\cite[Proposition~1.5]{DG1} ensure that the rescaled random variable
$Z_L:=L^{\frac12(d\wedge\beta)}X_L$
satisfies $\sigma_L^2:=\var{Z_L}\lesssim 1$. Moreover we have for all $L\ge1$,
\begin{equation}\label{e.as-no-TV}
\dW\bigg(\frac{Z_L}{\sigma_L},\Nc\bigg)+\dTV{\frac{Z_L}{\sigma_L}}{\Nc} \, \lesssim \, \sigma_L^{-2}L^{-\frac 12(d\wedge\beta)}.
\end{equation}}
\item 
Let the random field $A$ be given by the Poisson spherical inclusion model with radius law $\nu$ (cf.\@ example (A) in Section~\ref{sec:ran-loc}), and assume that $\nu$ satisfies for some $\beta>0$,
\[\gamma(\ell):=\nu([\ell,\ell+1))\lesssim \ell^{-3d-\beta-1}.\]
Then the results in~\cite[Proposition~1.5]{DG1} hold with weight $\pi(\ell)=(\ell+1)^{-2d-\beta-1}$ and the rescaled random variable $Z_L:=L^{d/2}X_L$ satisfies $\sigma_L^2:=\var{Z_L}\lesssim 1$. Moreover we have for all $L\ge 1$,
\begin{equation*}
\dW\bigg(\frac{Z_L}{\sigma_L},{\Nc}\bigg)+\dK\bigg(\frac{Z_L}{\sigma_L},{\Nc}\bigg) \, \lesssim \,\sigma_L^{-3}L^{-\frac12(d\wedge\beta)}.
\qedhere
\end{equation*}
\end{enumerate}
\end{prop}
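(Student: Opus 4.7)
The plan is to invoke the multiscale second-order Poincar\'e inequalities of Section~\ref{sec:alaChatt}: Theorem~\ref{th:fctinequ} for part~(i) and Theorem~\ref{th:chat-ra} (in its improved form) for part~(ii). The key observation common to both cases is that $Z_L$ is a \emph{linear functional} of $A$: hence $\parfct{A,S}Z_L$ is the deterministic quantity $L^{(d\wedge\beta)/2-d}|S\cap Q_L|$ in part~(i) (resp.\ $L^{-d/2}|S\cap Q_L|$ in part~(ii)), the second functional derivative $(\parfct{A})^2 Z_L$ vanishes identically, and the discrete second derivative $\pardis{}\pardis{} Z_L$ in part~(ii) is controlled by the \emph{intersection} of the two perturbation supports rather than by each support separately.

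For part~(i), since $(\parfct{A})^2 Z_L\equiv 0$, Theorem~\ref{th:fctinequ} reduces to $\sigma_L^{-2}\,\3\parfct{A} Z_L\3_\beta\,\|\parfct{A} Z_L\|_p$. A direct estimate $\int_{\R^d}|B_{\ell+1}(z)\cap Q_L|^2\,dz\lesssim L^d(\ell+1)^d\min((\ell+1)^d,L^d)$ combined with a split of the $\ell$-integral at $\ell\simeq L$ yields $\3\parfct{A} Z_L\3_\beta\lesssim 1$ both when $\beta<d$ and when $\beta>d$. The $p$-norm is computed as $\|\parfct{A} Z_L\|_p\lesssim L^{(d\wedge\beta)/2-d+d/p}$, and the optimal choice $p=d/(d-\beta)$ for $\beta<d$ (or $p=\infty$ in the reduced $\beta>d$ form) produces exactly $L^{-(d\wedge\beta)/2}$.

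For part~(ii), since the action radius $\rho_{x,t}=t\,\mathds1_{\X\ne\X^{x,t}}$ is $\sigma(\X|_{E_{x,t}},\X'|_{E_{x,t}})$-measurable, we apply the improved form of Theorem~\ref{th:chat-ra} with weight $\pi(t,\ell)\lesssim\gamma(\ell)\,\mathds1_{t\simeq \ell}$. The two essential geometric bounds are
\[|\pardis{\ell,x,t} Z_L|\,\lesssim\,L^{-d/2}|B_t(x)\cap Q_L|,\qquad|\pardis{\ell,x,t}\pardis{\ell',x',t'} Z_L|\,\lesssim\,L^{-d/2}|B_t(x)\cap B_{t'}(x')\cap Q_L|,\]
valid respectively on the nontriviality events $\{\X\ne\X^{x,t}\}$ and $\{\X\ne\X^{x,t}\}\cap\{\X\ne\X^{x',t'}\}$ (of probabilities $\lesssim\gamma(t)$ and $\lesssim\gamma(t)\gamma(t')$). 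The second bound follows from the decomposition $A=A_0+(A_1-A_0)\mathds1_{\mathcal I}$ together with the observation that the second mixed difference of $\mathds1_{\mathcal I}$ under two perturbations is supported in the intersection $B_t(x)\cap B_{t'}(x')$ of the two inclusion balls. Substituting into the four RHS terms of~\eqref{eq:chat-ar-gen} and optimizing $\lambda\in(0,1)$, the power-counting via $\sum_x|B_t(x)\cap Q_L|^p\simeq \min(t,L)^{dp}\max(t,L)^d$ and $\gamma(\ell)\lesssim\ell^{-3d-\beta-1}$ gives $\sigma_L^{-3}L^{-d/2}$ from the third-moment term (always $\le\sigma_L^{-3}L^{-(d\wedge\beta)/2}$) and $\sigma_L^{-2}L^{-(d\wedge\beta)/2}$ from the quadratic terms; the Kolmogorov correction $G_2(Z_L)$ is handled identically.

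The main obstacle lies in the bookkeeping for the quadratic terms of~\eqref{eq:chat-ar-gen}: the H\"older exponent $\lambda$ must be tuned so that $\pi(t,\ell)^\lambda\gamma(t)^{1-\lambda}$ is summable while enough probabilistic decay survives in $\gamma(t')$, and the $(t,t')$-sums must be carefully split at $t,t'\simeq L$ in order to extract the optimal rate $L^{-\beta/2}$ in the regime $\beta<d$ from the radius tail $\gamma$. A secondary but important technical point is the inclusion-exclusion argument that yields the second-derivative bound through the geometry of overlapping Poisson balls.
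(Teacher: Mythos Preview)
Your proposal is correct and follows essentially the same route as the paper: Theorem~\ref{th:fctinequ} for~(i) with the second functional derivative vanishing, and Theorem~\ref{th:chat-ra} (improved form) for~(ii) with the geometric bounds $|\pardis{\ell,x,t}Z_L|\lesssim L^{-d/2}|B_{\ell+1}(x)\cap Q_L|$ and the second discrete derivative controlled by the intersection of the two localization boxes.

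The one point where you work harder than necessary is what you call the ``main obstacle'' in~(ii). Because the bounds on $\pardis{\ell,x,t}Z_L$ and $\pardis{\ell,x,t}\pardis{\ell',x',t'}Z_L$ are \emph{deterministic} (they hold a.s., not merely in some $\Ld^p$), the moment $\expecm{|\pardis{\ell,x,t}Z_L|^{q}}^{1/q}$ is independent of $q$, and one may simply pass to the borderline exponent $\lambda=1$ in~\eqref{eq:chat-ar-gen}. This is exactly what the paper does: there is then no H\"older tuning at all, the weights enter directly as $\gamma(\ell)\gamma(\ell')\gamma(\ell'')$, and the four RHS terms reduce to elementary lattice sums that are estimated in a few lines. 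Your splitting of the $(t,t')$-sums and balancing of $\pi^\lambda\gamma^{1-\lambda}$ would of course work, but it is unnecessary bookkeeping. Likewise, the ``inclusion-exclusion argument'' you flag for the second-derivative bound is subsumed by the indicators already built into the definition of $\pardis{\ell,x,t}\pardis{\ell',x',t'}$: on those events the second difference of $A$ is automatically supported in $Q_{2\ell+1}(x)\cap Q_{2\ell'+1}(x')$, regardless of the specific structure of the Poisson inclusion model.
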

A similar result as above holds in stochastic homogenization, where $Z_L$ is replaced by the spatial average of the homogenization commutator~\cite{DGO1,DO1}.
Estimating $\sigma_L\lesssim 1$ from below is viewed as a separate issue.
In the context of stochastic homogenization with Gaussian coefficients, possible degeneracy (that is, $\sigma_L\ll1$) was first observed in the 1D setting~\cite{Taqqu,Gu-Bal-12,LNZH-17}, and we refer to~\cite{DFG} for a general study of degeneracy issues in the multidimensional setting.

{\color{black}
We briefly compare item~(ii) above with the existing literature for the Poisson inclusion model.
Let the radius law $\nu$ satisfy $\nu([\ell,\ell+1))\lesssim\ell^{-d-\kappa-1}$ with $\kappa>0$.
Previous works on general Boolean models~\cite{Baddeley-80,Mase-82} show that a qualitative CLT holds whenever $\kappa>0$.
Quantitative results are much more recent: a quantitative CLT with optimal rate~$L^{-\frac d2}$ in Kolmogorov distance was first proved in~\cite{Heinrich-05} when the radius law $\nu$ is exponentially decaying, while in~\cite{HLS-16} it is shown that a quantitative CLT (in a weaker metric) holds with rate $L^{-(\frac\kappa2-d)\wedge\frac d2\wedge1}$ whenever $\kappa>2d$. The above improves on these previous results: indeed, we establish a quantitative CLT in Wasserstein and Kolmogorov distances with rate~$L^{-(\frac\kappa2-d)\wedge\frac d2}$ whenever $\kappa>2d$, which we believe to be optimal.}

\begin{proof}[Proof of Proposition~\ref{prop:as-no}]
We split the proof into two steps.

\medskip

\step1 Proof of item~(i).\\
{\color{black}By~\cite[Theorem~3.1]{DG2}, we may apply \cite[Proposition~1.5]{DG1} with weight $\pi(\ell):=(\ell+1)^{-\beta-1}$, which yields $\sigma_L\lesssim 1$.
Next, we apply Theorem~\ref{th:fctinequ} to $Z_L$, which greatly simplifies in this precise linear situation since second derivatives of $Z_L$ wrt $A$ vanish identically.
More precisely, for all $L\ge1$, it leads to the following, for all $1\le p\le\frac{d}{(d-\beta)_+}$,
\begin{multline}\label{eq:bound-appl-Gauss}
\dW\bigg(\frac{Z_L}{\sigma_L},\Nc\bigg)+\dTV{\frac{Z_L}{\sigma_L}}{\Nc}
\,\lesssim\,\sigma_L^{-2}\,\expec{\Big(\int_{\R^d}|\parfct{\Aa,B(z)}Z_L|^p\,dz\Big)^\frac4p}^\frac14\\
\times\expec{\Big(\int_0^\infty \int_{\R^d}|\parfct{\Aa,B_{\ell+1}(z)}Z_L|^2\,dz\,(\ell+1)^{-d-\beta-1}\,d\ell\Big)^2}^\frac14.
\end{multline}
We compute for all $\ell\ge 0$,
\[\parfct{A,B_{\ell+1}(z)}Z_L(A)\, =\,L^{\frac12(d\wedge\beta)-d} |B_{\ell+1}(z) \cap Q_L|.\]
Choosing $p=\frac{d}{(d-\beta)_+}$, the first RHS factor in~\eqref{eq:bound-appl-Gauss} is then estimated by
\[\Big(\int_{\R^d}|\parfct{\Aa,B(z)}Z_L|^p\,dz\Big)^\frac1p\,\lesssim\,L^{\frac12(d\wedge\beta)-d+\frac dp}\,=\,L^{-\frac12(d\wedge\beta)}.\]
It remains to treat the second RHS factor in~\eqref{eq:bound-appl-Gauss}.
Note that it already appears in the proof that $\sigma_L\lesssim 1$ (cf.~\cite{DG1}); we reproduce the short argument for completeness.
We split the integral over $\ell$ into the contribution with $\ell \le L$ and $\ell >L$,
and we note that the integral over $x$ reduces to $B_{L+\ell}$,
to the effect of
\begin{multline*}
\int_0^\infty \int_{\R^d}|\parfct{\Aa,B_{\ell+1}(z)}Z_L|^2\,dz\,(\ell+1)^{-d-\beta-1}\,d\ell\\
\lesssim~L^{d\wedge\beta-2d}\int_0^L \ell^{2d}L^d(\ell+1)^{-d-\beta-1}\,d\ell+L^{d\wedge\beta-2d}\int_L^\infty L^{2d}\ell^d(\ell+1)^{-d-\beta-1}\,d\ell
~\lesssim~1.
\end{multline*}
The conclusion~\eqref{e.as-no-TV} follows.}

\medskip

\step2 Proof of item~(ii).\\
By \cite[Proposition~3.6]{DG2}, we may apply \cite[Proposition~1.5]{DG1} with the weight
\[\pi(\ell)\,\simeq\, (\ell+1)^d\sup_{u\ge \frac1{\sqrt d}\ell-4}\gamma(u)\,\lesssim\,\ell^{-2d-\beta-1},\]
which implies $\pi_*(L)\simeq L^{d}$, hence $\sigma_L\lesssim1$.
We then apply Theorem~\ref{th:chat-ra} to $Z_L$.
For all $x,x'\in \Z^d$ and $\ell,\ell'\in\N$, we have
\[|\pardis{\ell,x}Z_L|\,\lesssim \, L^{-\frac d2} |B_{\ell+1}(x)\cap Q_L|  \,\lesssim\, L^{-\frac d2} \big(L\wedge (\ell+1)\big)^d \mathds1_{|x|\lesssim L+\ell},\]
and also
\begin{align*}
|\pardis{\ell,x}\pardis{\ell',x'}Z_L|\,&\lesssim \, L^{-\frac d2} |B_{\ell'+1}(x')\cap B_{\ell+1}(x)\cap Q_L| \\
&\lesssim \,L^{-\frac d2}  \big(L\wedge (\ell+1)\wedge (\ell'+1)\big)^d \mathds1_{|x'|\lesssim L+\ell'} \mathds1_{|x|\lesssim L+\ell} \mathds1_{|x-x'|\lesssim \ell+\ell'}.
\end{align*}
As these RHS are deterministic, we may actually apply Theorem~\ref{th:chat-ra} with the borderline exponent $\lambda=1$, which yields
\begin{eqnarray*}
\lefteqn{\dW\bigg(\frac{Z_L}{{\sigma_L}},\Nc\bigg)+\dK\bigg(\frac{Z_L}{{\sigma_L}},\Nc\bigg)}\\
&\lesssim_\mu& \frac1{\sigma_L^2} \Bigg(\sum_{x,x',x''}\sum_{\ell,\ell',\ell''=0}^\infty \gamma(\ell)\gamma(\ell')\gamma(\ell'')\supessd{A} |\pardis{\ell,x}\pardis{\ell',x'}Z_L|\\
&&\hspace{2cm}\times \supessd{A} |\pardis{\ell,x}\pardis{\ell'',x''}Z_L|\supessd{A} |\pardis{\ell',x'} Z_L|\supessd{A} |\pardis{\ell'',x''} Z_L|\Bigg)^\frac12\\
&&+\frac1{\sigma_L^2} \Bigg(\sum_{x,x'}\sum_{\ell,\ell'=0}^\infty \gamma(\ell)\gamma(\ell')\supessd{A} |\pardis{\ell,x}\pardis{\ell',x'}Z_L|^2\supessd{A} |\pardis{\ell',x'} Z_L|^2\Bigg)^\frac12\\
&&+\frac1{\sigma_L^2} \Bigg(\sum_{x}\sum_{\ell=0}^\infty \gamma(\ell)\supessd{A} |\pardis{\ell,x}Z_L|^4\Bigg)^\frac12
+\frac1{\sigma_L^{3}}  \sum_{x}\sum_{\ell=0}^\infty \gamma(\ell)\supessd{A} |\pardis{\ell,x}Z_L|^3 .
\end{eqnarray*}
We denote by $I_1,I_2,I_3,I_4$ the four RHS terms. 
Given the bound $\gamma(\ell)\lesssim \ell^{-\beta'-1}$ for some $\beta'>0$, straightforward calculations left to the reader yield for all $L\ge1$,
\begin{align*}
I_1\,&\lesssim \,\sigma_L^{-2}L^{-\frac d2} (1\vee L^{2d-\beta'})^\frac32,
\\
I_2\,&\lesssim \,\sigma_L^{-2}L^{-\frac d2} (1\vee L^{3d-\beta'})^\frac12 (1\vee L^{2d-\beta'})^\frac12,
\\
I_3\,&\lesssim \,\sigma_L^{-2} L^{-\frac d2} (1\vee L^{4d-\beta'})^\frac12,
\\
I_4\,&\lesssim \,\sigma_L^{-3} L^{-\frac{d}2} (1\vee L^{3d-\beta'}).
\end{align*}
The dominating term wrt scaling in $L$ is the third one $I_3$, and the conclusion then follows by taking $\beta':=3d+\beta$ for $\beta>0$.
\end{proof}

\subsection{Random sequential adsorption and the jamming limit}\label{sec:RSA}

We consider the problem of sequential packing at saturation, following the presentation in \cite{Schreiber-Penrose-Yukich-07}.
Let $R>0$, and let $(U_{i,R})_{i\ge1}$ be a sequence of i.i.d.\@ random points uniformly distributed on the cube~$Q_R$. Let $\calS$ be a fixed bounded closed
convex set in $\R^d$ with non-empty interior and centered at the origin $0$ of
$\R^d$ (that is, a reference ``solid''), and for $i \ge1$ let $\calS_{i,R}$ be the translate of $\calS$ with center
at $U_{i,R}$. Then $\calS_R := (\calS_{i,R})_{i\ge 1}$ is an infinite sequence of solids centered at uniform random positions in $Q_R$ (the centers lie in $Q_R$ but the solids themselves need not lie wholly
inside $Q_R$).
Let the first solid $S_{1,R}$ be packed, and recursively for $i \ge2$ let the $i$-th solid
$\calS_{i,R}$ be packed if it does not overlap any solid in $\{\calS_{1,R},\dots, \calS_{i-1,R}\}$
which has already been packed. If not packed, the $i$-th solid is discarded.
This process, known as random sequential adsorption (RSA) with
infinite input on the domain $Q_R$, is irreversible and terminates when it is not possible to accept additional
solids. 
The jamming number $\Nc_R := \Nc_R(\calS_R)$ denotes the number of solids packed in $Q_R$ at termination. We are then interested in the asymptotic 
behavior of $R^{-d}\Nc_R$ in the infinite volume regime $R\uparrow\infty$, the limit of which (if it exists) is called the \emph{jamming limit}. 

\medskip

In any dimension $d \ge1$ and for any choice of the reference solid $\calS$, Penrose \cite{Penrose-01} established the existence of the jamming limit, as well as the existence of the infinite volume limit for the distribution of the centers of packed solids, which defines a point process $\xi$ on the whole of $\R^d$.
(In the model case $\calS:=B_1$, this process $\xi$ is referred to as the random parking point process with unit radius.)
As we now quickly recall, the key argument in~\cite{Penrose-01} relies on a graphical construction for $\xi$ as a transformation $\xi=\Phi(\Pc_0)$ of a unit intensity Poisson point process $\Pc_0$ on the extended space $\R^d\times\R_+$.
We first construct an oriented graph on the points of $\Pc_0$ in $\R^d\times\R_+$, by putting an oriented edge from $(x,t)$ to $(x',t')$ whenever $(x+\calS)\cap (x'+\calS)\ne\varnothing$ and $t<t'$ (or $t=t'$ and $x$ precedes $x'$ in the lexicographic order, say). We say that $(x',t')$ is an offspring (resp.\@ a descendant) of $(x,t)$, if $(x,t)$ is a direct ancestor (resp.\@ an ancestor) of $(x',t')$, that is, if there is an edge (resp.\@ a directed path) from $(x,t)$ to $(x',t')$. The set $\xi:=\Phi(\Pc_0)$ is then constructed as follows. Let $F_1$ be the set of all roots in the oriented graph (that is, the points of $\Pc_0$ without ancestor), let $G_1$ be the set of points of $\Pc_0$ that are offsprings of points of $F_1$, and let $H_1:=F_1\cup G_1$. Now consider the oriented graph induced on $\Pc_0\setminus H_1$, and define $F_2,G_2,H_2$ in the same way, and so on. By construction, the sets $(F_j)_j$ and $(G_j)_j$ are all disjoint and constitute a partition of $\Pc_0$. We finally define $\xi:=\Phi(\Pc_0):=\bigcup_{j=1}^\infty F_j$.

\medskip

In \cite{Schreiber-Penrose-Yukich-07}, Schreiber, Penrose, and Yukich further showed in any dimension $d\ge1$ that the rescaled variance $R^{-d}\var{\Nc_R}$ converges to
a positive limit (without rate) and that $\Nc_R$ satisfies a CLT, that is, the fluctuations of
the random variable $\Nc_R$ are asymptotically Gaussian. They also quantified the rate of
convergence to the Gaussian, as well as the rate of convergence of $R^{-d}\expecm{\Nc_R}$ to the jamming
limit. The numerical approximation of the value of the jamming limit has been the object of
several works, including \cite[Chapter~11.4]{Torquato-02} and \cite{TUS-06}. As is clear from the analysis, the speed of convergence of $R^{-d}\expecm{\Nc_R}$ towards its limit is dominated by a boundary effect (the error scales like $R^{-1}$).

\medskip
 
In order to avoid this boundary effect and to obtain better rates of convergence, we may replace $\Nc_R$ by the number $\tilde\Nc_R$ of packed solids with periodic boundary conditions on $Q_R$: we say that the $i$-th solid $\calS_{i,R}$ is packed with periodic boundary conditions if its {\it periodic extension} $\calS_{i,R}+R\Z^d$ does not overlap with any solid in $\{\calS_{1,R},\ldots,\calS_{i-1,R}\}$ which has already been packed. The following shows that this procedure allows to get rid of the boundary effect, yields optimal estimates, and therefore suggests a more efficient way to approximate the jamming limit numerically.
\begin{theor}\label{t.RPM}
For all $R\ge 0$, let $\tilde\Nc_R :=\tilde\Nc_R(\calS_{R})$ be the number of packed solids of $\calS_{R}$ with periodic boundary conditions as defined above.
There are constants $\mu :=\mu(\calS,d)\in (0,\infty)$ (the jamming limit) and $\sigma^2:=\sigma^2(\calS,d)\in (0,\infty)$ such that as $R\uparrow \infty$ we have
\begin{gather}\label{e.estim-RPM-1}
|R^{-d}\expecm{\tilde\Nc_R}-\mu|~\lesssim~ e^{-\frac1CR}, \\
|R^{-d}\varm{\tilde\Nc_R}-\sigma^2|~\lesssim~ e^{-\frac1CR},\label{e.estim-RPM-2}
\end{gather}
and
\begin{equation}\label{e.CLT-RPM}
\dW\Big(R^\frac d2 (R^{-d} \tilde\Nc_R-\mu)\,,\,\Nc(\sigma^2)\Big)+\dK\Big(R^\frac d2 (R^{-d} \tilde\Nc_R-\mu)\,,\,\Nc(\sigma^2)\Big)\,\lesssim\, R^{-\frac d2},
\end{equation}
where $\Nc(\sigma^2)$ denotes a centered normal random variable with variance $\sigma^2$.
\end{theor}
Estimates~\eqref{e.estim-RPM-1} and~\eqref{e.estim-RPM-2} are obtained as consequences of the stabilization properties established in~\cite{Schreiber-Penrose-Yukich-07}.
Note that \eqref{e.CLT-RPM} is the best one can hope for: If we considered a Poisson point process instead of the random parking process, then $\tilde\Nc_R$ would be
the number of Poisson points in $Q_R$, $\mu$ would be the intensity of the process, we would have $\sigma^2=\mu$, and~\eqref{e.CLT-RPM} would be sharp.
The proof of  \eqref{e.CLT-RPM} combines~\eqref{e.estim-RPM-1} and~\eqref{e.estim-RPM-2} to a normal approximation result, which is itself a slight improvement 
of \cite[Theorem~1.1]{Schreiber-Penrose-Yukich-07} in the sense that it avoids the spurious logarithmic correction $\log^{3d}(R)$.
This improvement is a direct consequence of Theorem~\ref{th:chat-ra}. It also follows from~\cite[Theorem~6.1]{LPS-16}, but the proof we display here is more direct.

\begin{proof}[Proof of Theorem~\ref{t.RPM}]
Denote by $\xi_R$ the ($R$-periodic extension of the) random parking measure on $Q_R$ with periodic boundary conditions (that is, the measure obtained as the sum of Dirac masses at the centers of the periodically packed solids in $Q_R$). Also denote by $\xi=\xi_\infty$ the corresponding random parking measure on the whole space $\R^d$. Note that by definition both measures $\xi_R$ and $\xi$ are stationary, and we have $\xi_R(Q_R)=\tilde\Nc_R$.

\smallskip\noindent
Let us first introduce a natural pairing between $\xi_R$ and $\xi$ based on the graphical construction recalled above. 
Replacing the original Poisson point process $\Pc_0$ by $\Pc_0 \cap (Q_R\times \R_+)+R\Z^d$ (that is, the $R$-periodization of the restriction of $\Pc_0$ to $Q_R\times\R_+$), and then running the same graphical 
construction as above, we obtain a version of the $R$-periodic random parking measure $\xi_R$. Using this version, we view both $\xi_R$ and $\xi$ as $\sigma(\Pc_0)$-measurable random measures for the same underlying Poisson point process $\Pc_0$. Note however that with this coupling the pair $(\xi_R,\xi)$ is no longer stationary.

\smallskip\noindent
We split the proof into three steps.
In the first step we recall the construction of action radii for $\xi_R$ and $\xi$. We then prove \eqref{e.estim-RPM-1} and~\eqref{e.estim-RPM-2} using the exponentially decaying tail  of the constructed action radii (or alternatively, the multiscale covariance inequality of \cite[Proposition~3.3]{DG2}), and finally we prove~\eqref{e.CLT-RPM} by appealing to Theorem~\ref{th:chat-ra}.

\medskip

\step1 Construction and properties of action radii.\\
In this step we claim for all $y$ that $\xi$ admits an action radius $\rho_y$ wrt $\Pc_0$ on $Q(y)\times\R_+$, that the restriction $\xi_R|_{Q_R}$ admits an action radius $\rho_{R,y}$ wrt $\Pc_0$ on $Q(y)\times\R_+$, and that we have
\[\pr{\rho_y>\ell}+\pr{\rho_{R,y}>\ell}\lesssim e^{-\frac1C\ell}.\]
In particular, we show that this implies
\begin{align}\label{e.stab-estim}
\sup_{y\in Q_{R/2}}\pr{\xi(Q(y))\ne\xi_{R}(Q(y))}\,\lesssim \,e^{-\frac1CR}.
\end{align}
The construction and tail behavior of the action radius $\rho_y$ follows from \cite[Proposition~3.3]{DG2} (with $\ell=0$). Let the action radius $\rho_{R,y}$ be constructed similarly (simply replacing $\Pc_0$ by the point set $\Pc_0\cap(Q_R\times\R_+)+R\Z^d$). A careful inspection of the proof of~\cite[Lemma~3.5]{Schreiber-Penrose-Yukich-07} reveals that the same exponential tail behavior holds for $\rho_{R,y}$ uniformly in $R>0$.
It remains to argue in favor of \eqref{e.stab-estim}, which simply follows from the exponential tail behavior of the action radii in the form
\begin{align*}
\sup_{y\in Q_{R/2}}\pr{\xi(Q(y))\ne\xi_{R}(Q(y))}\,\le\,\sup_{y\in Q_{R/2}}\pr{Q(y)+B_{\rho_y}\not\subset Q_R}\,\lesssim \,e^{-\frac1CR}.
\end{align*}
Note that~\eqref{e.stab-estim} also holds in the following form: for all $0<r<1$, there exists $C_r>0$ such that for all $R\ge 1$,
\[\sup_{y\in Q_{rR}}\pr{\xi(Q(y))\ne\xi_{R}(Q(y))}\,\lesssim \,e^{-\frac1{C_r}R}.\]
Using the uniform bound $\xi_R(Q_R \setminus Q_{rR})\lesssim |Q_R \setminus Q_{rR}|$, and letting $R\uparrow \infty$ and then $r\uparrow 1$, we deduce that
$\lim_{R\uparrow \infty} R^{-d}\expecm{\tilde\Nc_R} =\lim_{R\uparrow \infty} R^{-d}\expecm{\Nc_R}$ coincides with the jamming limit.

\medskip

\step2 Proof of \eqref{e.estim-RPM-1} and~\eqref{e.estim-RPM-2}.\\
By stationarity of $\xi_R$ and $\xi$ we find $\expec{\xi_R(Q_R)}=R^d\expec{\xi_R(Q)}$ and $\expec{\xi(Q_R)}=\mu R^d$ with $\mu:=\expec{\xi(Q)}$. We define 
\begin{align}\label{eq:int-cov-xi}
\sigma^2\,:=\,\int_{\R^d} \cov{\xi(Q(x))}{\xi(Q)}dx
\end{align}
and shall prove~\eqref{e.estim-RPM-1} and~\eqref{e.estim-RPM-2} in the form
\begin{equation}\label{e.conv-varianceRPM}
|R^{-d}\expecm{\tilde\Nc_R}-\mu|\lesssim e^{-\frac1CR}\qquad\text{and}\qquad |R^{-d}\varm{\tilde\Nc_R}-\sigma^2| \lesssim e^{-\frac1CR}.
\end{equation}
The estimate for the convergence of the mean follows from~\eqref{e.stab-estim} in the form
\begin{multline*}
|R^{-d}\expecm{\tilde\Nc_R}-\mu|=|\expecm{\xi_R(Q)-\xi(Q)}|\\
\le \supess\big(\xi_R(Q)+\xi(Q)\big)\, \pr{\xi_R(Q)\ne\xi(Q)}~\lesssim\,e^{-\frac1CR}.
\end{multline*}
We now appeal to the covariance inequality of \cite[Proposition~3.3]{DG2} to prove both the existence of $\sigma^2$ (by showing that the integral~\eqref{eq:int-cov-xi} is absolutely convergent) 
and the estimate for the convergence of the variance in~\eqref{e.conv-varianceRPM}.
Rather than using the complete covariance inequality, it is actually sufficient here to make direct use of the constructed action radii $\rho_0$ and $\rho_{R,0}$ of Step~1.
For $|y|\ge \sqrt d+1$, noting that given $\rho_0\vee\rho_y\le\frac12(|y|-\sqrt d)$ the random variables $\xi(Q(y))$ and $\xi(Q)$ are by definition independent, we obtain
\begin{eqnarray*}
\lefteqn{\cov{\xi(Q(y))}{\xi(Q)}}\\
&=&\expecm{(\xi(Q(y))-\mu)(\xi(Q)-\mu)\mathds1_{\rho_0\vee\rho_y> \frac12(|y|-\sqrt d)}}\\
&&\quad+\expeCm{(\xi(Q(y))-\mu)(\xi(Q)-\mu)}{\rho_0\vee\rho_y\le\tfrac12(|y|-\sqrt d)}\,\prm{\rho_0\vee\rho_y\le\tfrac12(|y|-\sqrt d)}\\
&=&\expecm{(\xi(Q(y))-\mu)(\xi(Q)-\mu)\mathds1_{\rho_0\vee\rho_y> \frac12(|y|-\sqrt d)}}\\
&&\quad+\prm{\rho_0\vee\rho_y\le \tfrac12(|y|-\sqrt d)}^{-1}\\
&&\hspace{2.5cm}\times\expecm{(\xi(Q(y))-\mu)\mathds1_{\rho_0\vee\rho_y\le\frac12(|y|-\sqrt d)}}\,\expecm{(\xi(Q)-\mu)\mathds1_{\rho_0\vee\rho_y\le\frac12(|y|-\sqrt d)}}\\
&=&\expecm{(\xi(Q(y))-\mu)(\xi(Q)-\mu)\mathds1_{\rho_0\vee\rho_y> \frac12(|y|-\sqrt d)}}\\
&&\quad+\big(1-\prm{\rho_0\vee\rho_y> \tfrac12(|y|-\sqrt d)}\big)^{-1}\\
&&\hspace{2.5cm}\times\expecm{(\xi(Q(y))-\mu)\mathds1_{\rho_0\vee\rho_y>\frac12(|y|-\sqrt d)}}\,\expecm{(\xi(Q)-\mu)\mathds1_{\rho_0\vee\rho_y>\frac12(|y|-\sqrt d)}},
\end{eqnarray*}
and hence, for all $|y|\ge C$ with $C\simeq1$ large enough such that
\[\prm{\rho_0\vee\rho_y>\tfrac12(|y|-\sqrt d)}\le2\,\prm{\rho_0>\tfrac12(|y|-\sqrt d)}\le \tfrac12,\]
we conclude
\begin{align}\label{eq:est-cov-xi}
|\cov{\xi(Q(y))}{\xi(Q)}|&\lesssim \supess\big(\xi(Q)^2\big)\,\prm{\rho_0> \tfrac12(|y|-\sqrt d)}\lesssim e^{-\frac1C|y|}.
\end{align}
Arguing similarly for $\xi_R$ with $\rho_0$ replaced by $\rho_{R,0}$, we deduce for all $y\in Q_R$,
\begin{align}\label{eq:est-cov-xiR}
|\cov{\xi_R(Q(y))}{\xi_R(Q)}|&\lesssim e^{-\frac1C|y|}.
\end{align}
The estimate~\eqref{eq:est-cov-xi} implies in particular that the integral for $\sigma^2$ in~\eqref{eq:int-cov-xi} is well-defined.
It remains to prove the estimate for the convergence of the variance in~\eqref{e.conv-varianceRPM}.
By $R$-periodicity and stationarity of $\xi_R$, we find
\begin{align*}
R^{-d}\varm{\tilde\Nc_R} &= R^{-d}\var{\int_{Q_R}\xi_R(Q(y))dy}\\
&= \fint_{Q_R} \int_{Q_R} \cov{\xi_R(Q(x-y))}{\xi_R(Q)}dxdy\\
&= \int_{Q_R} \cov{\xi_R(Q(y))}{\xi_R(Q)}dy,
\end{align*}
so that we may decompose
\begin{multline}\label{eq:decomp-diff-var}
\sigma^2-R^{-d}\varm{\tilde\Nc_R}\,=\,\int_{\R^d\setminus Q_{R/2}}\cov{\xi(Q(y))}{\xi(Q)}dy-\int_{Q_R\setminus Q_{R/2}}\cov{\xi_R(Q(y))}{\xi_R(Q)}dy\\
+\int_{Q_{R/2}}\big(\cov{\xi(Q(y))}{\xi(Q)}-\cov{\xi_R(Q(y))}{\xi_R(Q)}\big)dy.
\end{multline}
We estimate each of the three RHS terms separately. On the one hand, the estimates~\eqref{eq:est-cov-xi} and~\eqref{eq:est-cov-xiR} yield
\begin{align*}
\Big|\int_{\R^d\setminus Q_{R/2}}\cov{\xi(Q(y))}{\xi(Q)}dy\Big|\lesssim\int_{\R^d\setminus Q_{R/2}}e^{-\frac1C|y|}dy\lesssim e^{-\frac1CR}.
\end{align*}
and
\begin{align*}
\Big|\int_{Q_R\setminus Q_{R/2}}\cov{\xi_R(Q(y))}{\xi_R(Q)}dy\Big|\lesssim \int_{Q_R\setminus Q_{R/2}}e^{-\frac1C|y|}dy\lesssim e^{-\frac1CR}.
\end{align*}
On the other hand, using~\eqref{e.stab-estim}, we obtain
\begin{eqnarray*}
\lefteqn{\Big|\int_{Q_{R/2}}\big(\cov{\xi(Q(y))}{\xi(Q)}-\cov{\xi_R(Q(y))}{\xi_R(Q)}\big)dy\Big|}\\
&\le&\int_{Q_{R/2}}\expec{\,\big|\xi(Q)-\expec{\xi(Q)}\big|\,\big|\xi(Q(y))-\xi_R(Q(y))\big|\,}\\
&&\hspace{3cm}+\int_{Q_{R/2}}\expec{\,\big|\xi_R(Q(y))-\expec{\xi_R(Q(y))}\big|\,\big|\xi(Q)-\xi_R(Q)\big|\,}dy\\
&\lesssim&R^d\supess\big(\xi(Q)+\xi_R(Q)\big)\,\sup_{y\in Q_{R/2}}\pr{\xi(Q(y))\ne\xi_R(Q(y))}~\lesssim\, e^{-\frac1CR}.
\end{eqnarray*}
Injecting these estimates into~\eqref{eq:decomp-diff-var}, the conclusion~\eqref{e.conv-varianceRPM} for the convergence of the variance follows.

\medskip

\step3 Proof of \eqref{e.CLT-RPM}.\\
We claim that it is enough to prove the normal approximation estimate
\begin{equation}\label{e.CLT-RPMb}
\dW\bigg(\frac{\Nc_R-\expec{\Nc_R}}{\sqrt{\var{\Nc_R}}}\,,\,\Nc\bigg)+\dK\bigg(\frac{\Nc_R-\expec{\Nc_R}}{\sqrt{\var{\Nc_R}}}\,,\,\Nc\bigg)\,\lesssim\, R^{-\frac d2}.
\end{equation}
Indeed, the result~\eqref{e.CLT-RPM} then follows from \eqref{e.CLT-RPMb}, \eqref{e.estim-RPM-1}, and \eqref{e.estim-RPM-2} by the triangle inequality.
We omit the proof  of \eqref{e.CLT-RPMb}, which is identical to the proof of Proposition~\ref{prop:as-no}(ii) (the correction $L^{d-\beta}$ disappears here since the weight is exponential).
\end{proof}

%

\appendix
\addtocontents{toc}{\protect\setcounter{tocdepth}{0}}
\section{Proof for Gaussian fields} \label{append-Gaus}

{\color{black}
In this appendix, we consider the Gaussian case. Let $G$ be an $\R^k$-valued stationary centered Gaussian random field on $\R^d$, characterized by its covariance function $\Cc(x):=\cov{G(x)}{G(0)}$, and let $\Aa(x):=h(G(x))$ for some $h\in C^2(\R^k)$ with $\nabla h,\nabla^2h\in\Ld^\infty(\R^k)$.
Rather than giving a direct proof of Theorem~\ref{th:fctinequ} based on deforming the standard second-order Poincaré inequality satisfied by i.i.d.\@ Gaussian sequences (as we did for first-order inequalities in~\cite{DG2}), we make direct use of the known results in the framework of Malliavin calculus.
We first briefly recall basic notions of Malliavin calculus wrt the Gaussian field $G$, then we explain how the functional derivative~$\parfct{}$ relates to the Malliavin derivative, and we conclude how to derive multiscale functional inequalities from their known Malliavin counterparts.

\subsection{Reminder on Malliavin calculus}

{\color{black}For $\zeta,\zeta'\in C^\infty_c(\R^d)^k$, the random variables $\int_{\R^d} G\zeta$ and $\int_{\R^d} G\zeta'$
are by definition centered Gaussians with covariance
\[{\textstyle\cov{\int_{\R^d} G\zeta}{\int_{\R^d} G\zeta'}}:=\int_{\R^d}\int_{\R^d} \Cc(x-y):\zeta(x)\otimes\zeta'(y)\,dxdy.\]
We define  $\Hf$ as the closure of $C^\infty_c(\R^d)^k$ for the Hilbert norm
\begin{equation}\label{e.malliavin-SP}
\|\zeta\|_{\Hf}^2:=\langle\zeta,\zeta\rangle_\Hf,\qquad\langle\zeta,\zeta'\rangle_\Hf:=\int_{\R^d}\int_{\R^d} \Cc(x-y):\zeta(x)\otimes\zeta'(y)\,dxdy.
\end{equation}}
This space $\Hf$ (up to taking the quotient wrt the kernel of the norm) is a separable Hilbert space.
We recall some elements of the Malliavin calculus wrt the Gaussian field $G$ (see e.g.~\cite{Malliavin-97,Nourdin-Peccati-12} for details).
Without loss of generality, we may assume that the probability space $\Omega$ is endowed with the minimal $\sigma$-algebra $\F=\sigma(G)$. This implies that the linear subspace
\begin{align}\label{eq:def-Rc}
\Rc:=\Big\{g\big({\textstyle\int_{\R^d} G\zeta_1,\ldots,\int_{\R^d} G\zeta_n}\big)\,:\,n\in\N,\,g\in C_c^\infty((\R^d)^n),\,\zeta_1,\ldots,\zeta_n\in C_c^\infty(\R^d)^k\Big\}
\end{align}
is dense in $\Ld^2(\Omega)$, which allows to define operators and prove properties on the simpler subspace $\Rc$ before extending them to $\Ld^2(\Omega)$ by density. For $r\ge1$, we similarly define
\[\Rc(\Hf^{\otimes r}):=\Big\{\sum_{i=1}^n\psi_iZ_i\,:\,n\in\N,\,Z_1,\ldots, Z_n\in\Rc,\,\psi_1,\ldots,\psi_n\in \Hf^{\otimes r}\Big\}\,\subset\,\Ld^2(\Omega;\Hf^{\otimes r}),\]
which is dense in $\Ld^2(\Omega;\Hf^{\otimes r})$.
For a random variable $Z\in\Rc$, say $Z=g(\int_{\R^d} G\zeta_1,\ldots,\int_{\R^d} G\zeta_n)$, we define its \emph{Malliavin derivative} $DZ\in \Ld^2(\Omega;\Hf)$ as
\begin{align}\label{eq:D-expl}
D_xZ=\sum_{i=1}^n\zeta_i(x)\,(\nabla_i g)\big({\textstyle\int_{\R^d} G\zeta_1,\ldots,\int_{\R^d} G\zeta_n}\big).
\end{align}
For elements $X\in\Rc(\Hf^{\otimes r})$ with $r\ge1$, say $Z=\sum_{i=1}^n\psi_iZ_i$, the Malliavin derivative $DZ\in\Ld^2(\Omega;\Hf^{\otimes(r+1)})$ is then given by $DZ=\sum_{i=1}^n\psi_i\otimes DZ_i$.
For $j\ge1$, we iteratively define the $j$-th Malliavin derivative $D^j:\Rc(\Hf^{\otimes r})\to\Ld^2(\Omega;\Hf^{\otimes (r+j)})$ for all $r\ge0$.
Next, for $r,m\ge0$, we set
\begin{gather*}
\langle Y,Z\rangle_{\Dm^{m,2}(\Hf^{\otimes r})}:=\expec{\langle Y,Z\rangle_{\Hf^{\otimes r}}}+\sum_{j=1}^m\expec{\langle D^jY,D^jZ\rangle_{\Hf^{\otimes(r+j)}}},
\end{gather*}
we define the {\it Malliavin-Sobolev space} $\Dm^{m,2}(\Hf^{\otimes r})$ as the closure of $\Rc(\Hf^{\otimes r})$ for the corresponding norm, and we extend the Malliavin derivatives $D^j$ by density to these spaces.

Based on these definitions, we state the following proposition, which collects various classical results from Malliavin calculus. Item~(i) is standard (e.g.~\cite{Houdre-PerezAbreu-95}), as well as item~(ii) (e.g.~\cite{Bakry-94} and references therein).
Item~(iii) is a consequence of Stein's method: it was first obtained in the discrete setting by Chatterjee~\cite{Chat08}, while the present Malliavin analogue is due to~\cite{NP-08,NPR-09}.
A short proof can be found e.g.\@ in~\cite[Appendix~A]{DO1}.

\begin{prop}[\cite{Houdre-PerezAbreu-95,Bakry-94,Chat08,NP-08,NPR-09}]\label{prop:Mall}$ $
\begin{enumerate}[(i)]
\item \emph{First-order Poincaré inequality:} For all $Z\in \Ld^2(\Omega)$,
\begin{align*}
\var{Z}\le\expec{\|DZ\|_{\Hf}^2}.
\end{align*}
\item \emph{Logarithmic Sobolev inequality:} For all $Z\in\Ld^2(\Omega)$,
\[\ent{Z^2}:=\expec{Z^2\log\frac{Z^2}{\expec{Z^2}}}\le2\,\expec{\|DZ\|_\Hf^2}.\]
\item \emph{Second-order Poincaré inequality:} For all $Z\in \Ld^2(\Omega)$ with $\expec{Z}=0$ and $\var{Z}=1$,
\[\quad\dW(Z,\Nc)\vee\dTV{Z}{\Nc}\,\le\,3\,\expec{\|D^2Z\|_{\op}^4}^\frac14\expec{\|DZ\|_\Hf^4}^\frac14,\]
where $\dW(\cdot,\Nc)$ and $\dTV\cdot\Nc$ denote the $1$-Wasserstein and the total variation distances to a standard Gaussian law and where the norm of $D^2Z$ is defined by
\begin{equation}\label{eq:def-op}
\|D^2Z\|_{\op}:=\sup_{\zeta,\zeta'\in\Hf\atop\|\zeta\|_\Hf=\|\zeta'\|_\Hf=1}\langle D^2Z,\zeta\otimes\zeta'\rangle_{\Hf^{\otimes2}}.
\qedhere
\end{equation}
\end{enumerate}
\end{prop}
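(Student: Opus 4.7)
The plan is to sketch how each of the three items follows from now-classical Malliavin machinery, extending by density from the cylindrical class $\Rc$ of~\eqref{eq:def-Rc}; I would follow the presentation of~\cite{Nourdin-Peccati-12}.

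For items~(i) and~(ii), I would exploit the Wiener chaos decomposition associated to $G$. Since the probability space is generated by $G$, any $Z\in\Ld^2(\Omega)$ admits an orthogonal decomposition $Z=\expec{Z}+\sum_{n\ge1}I_n(f_n)$ with symmetric $f_n\in\Hf^{\otimes n}$. A direct computation using orthogonality of the multiple integrals gives
\[\var{Z}=\sum_{n\ge1}n!\,\|f_n\|_{\Hf^{\otimes n}}^2,\qquad \expec{\|DZ\|_\Hf^2}=\sum_{n\ge1}n\cdot n!\,\|f_n\|_{\Hf^{\otimes n}}^2,\]
so that~(i) is immediate from $n\ge1$ on every nontrivial chaos. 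The logarithmic Sobolev inequality~(ii) is more delicate; the cleanest route is the semigroup proof via the Ornstein-Uhlenbeck semigroup $P_t=e^{tL}$ generated by $L=-\delta D$, which satisfies Mehler's formula and the Bakry-\'Emery $CD(1,\infty)$ curvature-dimension condition: integrating $\tfrac{d}{dt}\expec{P_t(Z^2)\log P_t(Z^2)}$ along the flow then produces Gross' inequality with the sharp constant~$2$.

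The heart of the matter is~(iii), which I would obtain by combining Malliavin calculus with Stein's method in the spirit of~\cite{NP-08,NPR-09}. The point of departure is the Stein equation $f'(x)-xf(x)=\varphi(x)-\expec{\varphi(\Nc)}$, whose solution $f_\varphi$ admits bounds $\|f_\varphi'\|_\infty\lesssim\|\varphi\|_\Lip$ for Wasserstein test functions and $\|f_\varphi'\|_\infty\lesssim 1$ for total-variation test functions. Writing $Z=LL^{-1}Z=-\delta DL^{-1}Z$ and using the duality $\expec{\langle DY,u\rangle_\Hf}=\expec{Y\,\delta u}$ then yields the key integration-by-parts identity $\expec{Zf_\varphi(Z)}=\expec{f_\varphi'(Z)\,\langle DZ,-DL^{-1}Z\rangle_\Hf}$. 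Combined with $\expec{\langle DZ,-DL^{-1}Z\rangle_\Hf}=\var{Z}=1$, this leads to
\[\dW(Z,\Nc)\vee\dTV{Z}{\Nc}\,\lesssim\,\varm{\langle DZ,-DL^{-1}Z\rangle_\Hf}^{1/2}.\]
To close the argument, I would apply the first-order Malliavin-Poincar\'e inequality of~(i) to the random variable $\langle DZ,-DL^{-1}Z\rangle_\Hf$ itself, use that $-DL^{-1}$ is a contraction on $\Ld^2(\Omega;\Hf)$ (a consequence of Mehler's formula applied to each chaos), dominate the resulting tensor contraction pointwise by the operator norm $\|D^2Z\|_{\op}$ defined in~\eqref{eq:def-op}, and conclude by Cauchy-Schwarz
\[\varm{\langle DZ,-DL^{-1}Z\rangle_\Hf}\,\lesssim\,\expec{\|D^2Z\|_{\op}^2\,\|DZ\|_\Hf^2}\,\le\,\expec{\|D^2Z\|_{\op}^4}^{1/2}\expec{\|DZ\|_\Hf^4}^{1/2}.\]

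The main technical obstacle is justifying the manipulations of the unbounded operators $D$, $\delta$, and $L^{-1}$ outside the cylindrical class~$\Rc$: the entire algebraic argument is carried out first for $Z\in\Rc$ where all quantities are explicit via~\eqref{eq:D-expl}, and then extended to the Malliavin-Sobolev spaces $\Dm^{m,2}(\Hf^{\otimes r})$ by density, using the closability of $D$ and $\delta$ together with the boundedness of $-DL^{-1}$ on $\Ld^2(\Omega;\Hf)$.
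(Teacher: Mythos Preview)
The paper does not actually prove this proposition: it is stated as a collection of classical results with references to \cite{Houdre-PerezAbreu-95,Bakry-94,Chat08,NP-08,NPR-09}, and the surrounding text simply indicates that item~(iii) ``is a consequence of Stein's method \ldots\ the present Malliavin analogue is due to~\cite{NP-08,NPR-09}'' and that ``a short proof can be found e.g.\@ in~\cite[Appendix~A]{DO1}.'' There is therefore no in-paper argument to compare against.

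Your sketch is a faithful outline of exactly the proof contained in those references, in particular the Nourdin--Peccati--Reinert argument for~(iii): Stein's equation, the Malliavin integration by parts $\expec{Zf(Z)}=\expec{f'(Z)\langle DZ,-DL^{-1}Z\rangle_\Hf}$, reduction to $\varm{\langle DZ,-DL^{-1}Z\rangle_\Hf}^{1/2}$, a second application of the Poincar\'e inequality to this quantity, the contraction property of $-DL^{-1}$, and the final Cauchy--Schwarz step. The chaos-expansion proof of~(i) and the Bakry--\'Emery semigroup proof of~(ii) are likewise the standard ones. In short, your proposal is correct and coincides with the approach the paper points to; there is nothing to contrast.
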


\subsection{Link with functional derivative}
{\color{black}While Malliavin calculus is developed for all $\sigma(G)$-measurable random variables, we now focus on the subspace $\Ld^2_A(\Omega)$ of $\sigma(A)$-measurable random variables in $\Ld^2(\Omega)$. While Malliavin derivative is defined on the regular subspace~$\Rc$ (cf.~\eqref{eq:def-Rc}) and extended by density, we consider the corresponding regular subspace,
\[\Rc_A:=\Big\{g\big({\textstyle\int_{\R^d} A\zeta_1,\ldots,\int_{\R^d} A\zeta_n})\big)\,:\,n\in\N,\,g\in C_c^\infty((\R^d)^n),\,\zeta_1,\ldots,\zeta_n\in C_c^\infty(\R^d)\Big\},\]
which is dense in $\Ld^2_A(\Omega)$.
A simple computation then relates the Malliavin derivative~\eqref{eq:D-expl} to the functional derivative~\eqref{eq:def-frech} commonly considered e.g.\@ in stochastic homogenization.

\begin{lem}\label{lem:derivatives}
For all $\sigma(A)$-measurable random variable $Z(A)\in\Rc_A$, there holds
\begin{equation*}
(DZ(\Aa))_j=(\nabla_{j}h)(G)\,\tfrac{\partial Z(\Aa)}{\partial\Aa},
\end{equation*}
and similarly
\begin{align*}
&(D^2_{xy}Z(\Aa))_{jl}\,=\,\delta(x-y)\,(\nabla^2_{jl}h)(G(x))\,\tfrac{\partial Z(\Aa)}{\partial\Aa}(x)\nonumber\\
&\hspace{6cm}+(\nabla_{l}h)(G(y))\,(\nabla_{j}h)(G(x))\,\tfrac{\partial^2Z(\Aa)}{\partial\Aa^2}(x,y).\qedhere
\end{align*}
\end{lem}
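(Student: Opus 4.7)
The plan is a direct chain-rule computation for a generic smooth cylindrical element of $\Rc_\Aa$, followed by identification of the resulting expressions with the functional derivatives $\partial Z/\partial\Aa$ and $\partial^2Z/\partial\Aa^2$. Since both $D^j$ and $\parfct{}$ (after integration against test perturbations) are linear continuous operators and $\Rc_\Aa$ is dense in $\Ld^2_\Aa(\Omega)$, it suffices to treat the generating case $Z(\Aa)=g(F_1(\Aa),\ldots,F_n(\Aa))$ with $F_i(\Aa):=\int_{\R^d}\Aa\,\zeta_i=\int_{\R^d} h(G(x))\,\zeta_i(x)\,dx$, where $g\in C^\infty_c((\R^d)^n)$ and $\zeta_i\in C^\infty_c(\R^d)$.

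First I would compute $DF_i$. Approximating the integral $F_i$ by Riemann sums of the form $\sum_\alpha h(G(x_\alpha))\,\zeta_i(x_\alpha)|\Delta_\alpha|$, each summand is a smooth function of the Gaussian $G(x_\alpha)$, whose Malliavin derivative is classical; passing to the limit in $\Dm^{1,2}(\Hf)$ (the relevant bounds follow from $\nabla h\in\Ld^\infty$ together with \eqref{e.malliavin-SP}) yields
\[(D_yF_i)_j=(\nabla_jh)(G(y))\,\zeta_i(y).\]
The standard chain rule for $D$ then gives
\[(D_yZ)_j=\sum_{i=1}^n(\partial_ig)(F_1,\ldots,F_n)\,(D_yF_i)_j=(\nabla_jh)(G(y))\sum_{i=1}^n(\partial_ig)(F_1,\ldots,F_n)\,\zeta_i(y).\]
On the other hand, expanding $Z(\Aa+tB)-Z(\Aa)$ to first order and identifying \eqref{eq:def-frech} gives
\[\frac{\partial Z(\Aa)}{\partial \Aa}(y)=\sum_{i=1}^n(\partial_ig)(F_1,\ldots,F_n)\,\zeta_i(y),\]
so comparison yields the first identity.

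For the second identity I would apply $D$ once more, term by term. Since $(\nabla_jh)(G(y))$ is itself a smooth cylindrical functional of $G(y)$, its Malliavin derivative is $(D_x(\nabla_jh)(G(y)))_l=(\nabla^2_{jl}h)(G(y))\,\delta(x-y)$ (in the sense of distributions on $\Hf$; this is what an approximation by Riemann sums gives in the limit). Combining with the Leibniz rule,
\begin{align*}
(D^2_{xy}Z)_{jl}&=(D_x(\nabla_jh)(G(y)))_l\,\tfrac{\partial Z(\Aa)}{\partial\Aa}(y)+(\nabla_jh)(G(y))\,\big(D_x\tfrac{\partial Z(\Aa)}{\partial\Aa}(y)\big)_l\\
&=\delta(x-y)\,(\nabla^2_{jl}h)(G(y))\,\tfrac{\partial Z(\Aa)}{\partial\Aa}(y)+(\nabla_jh)(G(y))\,(\nabla_lh)(G(x))\,\tfrac{\partial^2Z(\Aa)}{\partial\Aa^2}(x,y),
\end{align*}
where the second equality uses the first-order identity applied to the cylindrical random variable $y\mapsto\frac{\partial Z(\Aa)}{\partial\Aa}(y)=\sum_i(\partial_ig)(F_1,\ldots,F_n)\zeta_i(y)$, whose first-order functional derivative in $\Aa$ at point $x$ is readily computed to be $\frac{\partial^2Z(\Aa)}{\partial\Aa^2}(x,y)=\sum_{i_1,i_2}(\partial^2_{i_1i_2}g)(F_1,\ldots,F_n)\zeta_{i_1}(x)\zeta_{i_2}(y)$, matching \eqref{eq:def-frech} for the mixed second difference. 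After using the symmetry $D^2_{xy}=D^2_{yx}$ (or equivalently $(j,x)\leftrightarrow(l,y)$), this is the stated formula.

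The only technical point requiring care is the justification of the Malliavin chain rule for the specific functionals $h(G(x))$ and $F_i(\Aa)$: neither is a finite-dimensional cylindrical functional of the form~\eqref{eq:def-Rc}, and a direct application of~\eqref{eq:D-expl} is not available. I expect this to be the main (though mild) obstacle, and I would handle it by a standard density argument, approximating $G$ tested against a mollifier and passing to the limit in $\Dm^{1,2}(\Hf)$ and $\Dm^{2,2}(\Hf^{\otimes2})$ using $\nabla h,\nabla^2h\in\Ld^\infty$; the closability of $D$ and $D^2$ on $\Ld^2(\Omega)$ then ensures that the identities derived on the approximants pass to the limit, yielding the lemma in full.
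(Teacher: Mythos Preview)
Your proposal is correct and follows essentially the same route as the paper: take a generic cylindrical $Z(A)=g(\int A\zeta_1,\ldots,\int A\zeta_n)$, compute $DZ$ by the chain rule, and identify the resulting sum $\sum_i(\partial_ig)\zeta_i$ with $\partial Z/\partial A$; the paper then says the second-order identity ``is obtained similarly,'' which is exactly the Leibniz computation you spell out. The only difference is in the approximation used to bring $h(G)$ into the domain where~\eqref{eq:D-expl} applies: the paper mollifies $G$ by $\chi_\e\ast G$ from the outset, whereas you first try Riemann sums (which are not literally in $\Rc$, since point evaluations $G(x_\alpha)$ are not tested against $C^\infty_c$ functions) and then correctly identify the mollifier density argument as the clean fix---so in the end the two proofs coincide.
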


\begin{proof}
Let $Z(A)\in\Rc_A$, say $Z(A)=g(\int_{\R^d}A\zeta_1,\ldots,\int_{\R^d}A\zeta_n)$.
Approximating $A:=h(G)$ by $A_\e:=h(\chi_\e\ast G)$ with $\chi_\e=\e^{-d}\chi(\frac1\e\cdot)$, $\chi\in C^\infty_c(\R^d)$, and $\int_{\R^d}\chi=1$, and applying the definition~\eqref{eq:D-expl} of the Malliavin derivative, we easily compute 
\[(D_xZ(A))_j=(\nabla_jh)(G(x))\sum_{i=1}^n\zeta_i(x)\,(\nabla_ig)\big({\textstyle\int_{\R^d}A\zeta_1,\ldots,\int_{\R^d}A\zeta_n}\big).\]
Recognizing the definition of the functional derivative~\eqref{eq:def-frech} in form of
\[\tfrac{\partial Z(A)}{\partial A}(x)=\sum_{i=1}^n\zeta_i(x)\,(\nabla_ig)\big({\textstyle\int_{\R^d}A\zeta_1,\ldots,\int_{\R^d}A\zeta_n}\big),\]
the claim follows for the first Malliavin derivative. The result for the second derivative is obtained similarly.
\end{proof}

Next, while the size of the Malliavin derivative is naturally measured in the norm of $\Hf$, we proceed to a radial change of variables to transform this Hilbert norm into a multiscale weighted norm as in the proof of~\cite[Theorem~3.1]{DG2}.}
\begin{lem}\label{lem:bound-weight}
Assume that $|\Cc(x)|\le c(|x|)$ for some decreasing function $c:\R_+\to\R_+$.
Then, for all $H\in C^\infty_c(\R^d)^k$,
\[\|H\|_{\Hf}^2\,\lesssim\, \int_0^\infty \int_{\R^d}\bigg(\int_{B_{\ell+1}(x)}|H|\bigg)^2dx\,(\ell+1)^{-d}\,(-c'(\ell))\,d\ell.\]
In particular, if $\Cc$ is integrable in the sense of $\int_0^\infty(\ell+1)^d\,(-c'(\ell))\,d\ell\lesssim1$, we find
\[\|H\|_{\Hf}^2\,\lesssim\,\int_{\R^d}\bigg(\int_{B(x)}|H|\bigg)^2dx.\qedhere\]
\end{lem}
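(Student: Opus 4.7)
\textbf{Proof plan for Lemma~\ref{lem:bound-weight}.}
The plan is to start from the quadratic form
\[\|H\|_\Hf^2=\iint \Cc(x-y):H(x)\otimes H(y)\,dx\,dy\le\iint c(|x-y|)\,|H(x)|\,|H(y)|\,dx\,dy,\]
where the last inequality uses $|\Cc(x-y)|\le c(|x-y|)$ together with the Cauchy--Schwarz inequality on the tensor contraction. The remaining task is to decompose the decreasing profile $c$ via a layer-cake representation tailored to the ``$\ell+1$'' radii appearing in the statement. Since $c$ is decreasing and tends to $0$ at infinity, the pointwise bound $c(s)\le c((s-1)_+)=\int_0^\infty(-c'(\ell))\,\mathds1_{s<\ell+1}\,d\ell$ holds for all $s\ge0$; applied at $s=|x-y|$ and combined with Tonelli, this reduces the estimate to controlling, for each $r=\ell+1$, the truncated double integral
\[I(r)\,:=\,\iint|H(x)|\,|H(y)|\,\mathds1_{|x-y|<r}\,dx\,dy\]
by $r^{-d}\int_{\R^d}\Psi_r(x)^2\,dx$, where $\Psi_r(x):=\int_{B_r(x)}|H|$.

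The key geometric ingredient is the elementary inclusion $B_{r-|y-z|/2}\!\bigl(\tfrac{y+z}{2}\bigr)\subseteq B_r(y)\cap B_r(z)$, which yields the lower bound $|B_r(y)\cap B_r(z)|\ge|B_{r/2}|\gtrsim r^d$ in the full range $|y-z|<r$. Plugged into the identity
\[\int_{\R^d}\Psi_r(x)^2\,dx\,=\,\iint|H(y)|\,|H(z)|\,|B_r(y)\cap B_r(z)|\,dy\,dz,\]
this produces $I(r)\lesssim r^{-d}\int\Psi_r^2$, and combining with the layer-cake step yields the main inequality of the lemma.

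For the ``in particular'' statement, the pointwise bound $c_d\mathds1_{B_{\ell+1}}\le\mathds1_B\ast\mathds1_{B_{\ell+2}}$ together with Young's convolution inequality gives $\|\Psi_{\ell+1}\|_{\Ld^2}=\||H|\ast\mathds1_{B_{\ell+1}}\|_{\Ld^2}\lesssim(\ell+1)^d\,\|\Psi_1\|_{\Ld^2}$, so that $(\ell+1)^{-d}\int\Psi_{\ell+1}^2\lesssim(\ell+1)^d\int\Psi_1^2$; integrating against $(-c'(\ell))$ and invoking the assumed integrability $\int_0^\infty(\ell+1)^d(-c'(\ell))\,d\ell\lesssim 1$ then collapses the multiscale expression into the single-scale bound $\int\Psi_1^2$.

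The main obstacle I anticipate is matching the radii exactly: a naive layer cake $c(s)=\int_0^\infty(-c'(r))\mathds1_{s<r}\,dr$ combined with a suboptimal geometric bound of the form $|B_r(y)\cap B_r(z)|\gtrsim r^d\,\mathds1_{|y-z|<r/2}$ would leave a factor-$2$ mismatch in the radii, forcing a change of variable $\ell\mapsto 2\ell$ in $-c'(\ell)$ that would be incompatible with the statement. It is precisely the slightly sharper inclusion with $B_{r-|y-z|/2}$, valid in the full range $|y-z|<r$, that makes both sides align with the same radius $\ell+1$ and produces the stated inequality without loss.
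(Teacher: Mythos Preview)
Your argument is correct and follows the same overall arc as the paper (bound by $\iint c(|x-y|)\,|H(x)|\,|H(y)|$, extract $-c'(\ell)$, and reduce to squared local integrals), but the implementation differs in two places. For the extraction of $-c'$, the paper passes to spherical coordinates and integrates by parts in the radial variable, while you use the equivalent layer-cake identity $c(s)\le c((s-1)_+)=\int_0^\infty(-c'(\ell))\,\mathds1_{s<\ell+1}\,d\ell$; these are interchangeable. For the reduction to $\big(\int_{B_{\ell+1}(x)}|H|\big)^2$, the paper uses a translation-averaging trick that produces balls of radius $2(\ell+1)$ and then invokes a covering argument to pass back to $\ell+1$, whereas your convolution identity $\int\Psi_r^2=\iint|H(y)|\,|H(z)|\,|B_r(y)\cap B_r(z)|\,dy\,dz$ together with the sharp inclusion $B_{r-|y-z|/2}(\tfrac{y+z}{2})\subset B_r(y)\cap B_r(z)$ lands directly on the correct radius. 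This is precisely the refinement you flag in your last paragraph, and it makes your route slightly cleaner. For the ``in particular'' part, the paper uses Jensen's inequality (write $\int_{B_{\ell+1}(x)}|H|\lesssim(\ell+1)^d\fint_{B_{\ell+1}(x)}\Psi_1$ and square), which amounts to the same estimate $(\ell+1)^{-d}\|\Psi_{\ell+1}\|_{\Ld^2}^2\lesssim(\ell+1)^d\|\Psi_1\|_{\Ld^2}^2$ that you obtain via Young's inequality; both arguments need the implicit assumption $c(\infty)=0$ for the layer-cake/integration-by-parts step, which the paper also uses without comment.
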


\begin{proof}
Using $|\Cc(x)|\le c(|x|)$, taking local averages, passing to spherical coordinates, and integrating by parts, we find
\begin{eqnarray*}
\|H\|_{\Hf}^2
&\lesssim & \int_{\R^d}|H(x)|\int_{\Sp^{d-1}}\int_0^\infty\Big(\fint_{B(x+\ell u)} |H|\Big)\ell^{d-1}\,c(\ell)\,d\ell\,d\sigma(u)\,dx\\
&=& \int_{\R^d}|H(x)|\int_{\Sp^{d-1}}\int_0^\infty\int_0^\ell
\Big(\fint_{B(x+s u)}|H|\Big) s^{d-1}ds\,(-c'(\ell))\,d\ell\,d\sigma(u)\,dx\\
&\lesssim& \int_{\R^d} |H(x)| \int_0^\infty\bigg(\int_{B_{\ell+1}(x)}|H|\bigg)\,(-c'(\ell))\,d\ell\,dx.
\end{eqnarray*}
Taking local spatial averages, this turns into
\begin{eqnarray*}
{\|H\|_{\Hf}^2}
&\lesssim& \int_0^\infty \int_{\R^d}\int_{B_{\ell+1}}|H(x+y)|\bigg(\int_{B_{\ell+1}(x+y)}|H|\bigg)dydx\,(\ell+1)^{-d}\,(-c'(\ell))\, d\ell\\
&\le& \int_0^\infty \int_{\R^d}\bigg(\int_{B_{2(\ell+1)}(x)}|H|\bigg)^2dx\,(\ell+1)^{-d}\,(-c'(\ell))\,d\ell.
\end{eqnarray*}
Covering balls of size $2(\ell+1)$ with balls of size~$\ell+1$, this yields the first claim. The second claim follows from Jensen's inequality.
\end{proof}

\subsection{Multiscale first- and second-order Poincaré inequalities}
Combining Proposition~\ref{prop:Mall}(i)--(ii) with Lemmas~\ref{lem:derivatives} and~\ref{lem:bound-weight}, we deduce
\begin{eqnarray*}
\var{Z(\Aa)}&\le&\|\nabla h\|_{\Ld^\infty}^2\expec{\int_0^\infty \int_{\R^d}|\parfct{A,B_{\ell+1}(x)}Z(\Aa)|^2\,dx\,(\ell+1)^{-d}\,(-c'(\ell))\,d\ell},\\
\ent{Z(\Aa)^2}&\le&2\|\nabla h\|_{\Ld^\infty}^2\expec{\int_0^\infty \int_{\R^d}|\parfct{A,B_{\ell+1}(x)}Z(\Aa)|^2\,dx\,(\ell+1)^{-d}\,(-c'(\ell))\,d\ell}.
\end{eqnarray*}
This constitutes an alternative proof of~\cite[Theorem~3.1]{DG2}.
We turn to the case of second-order inequalities and establish Theorem~\ref{th:fctinequ}.

\begin{proof}[Proof of Theorem~\ref{th:fctinequ}]
Setting $\sigma^2:=\var{Z(\Aa)}$, we apply Proposition~\ref{prop:Mall}(iii) in the form
\begin{multline*}
\dW\big(\sigma^{-1}(Z(\Aa)-\expec{Z(\Aa)})\,,\,\Nc\big)+\dTV{ \sigma^{-1}(Z(\Aa)-\expec{Z(\Aa)})\,}{\,\Nc}\\
\,\lesssim\,\sigma^{-2}\,\expec{\|D^2Z(\Aa)\|_{\op}^4}^\frac14\expec{\|DZ(\Aa)\|_\Hf^4}^\frac14,
\end{multline*}
and it remains to estimate the two RHS factors in terms of functional derivatives.
By Lemmas~\ref{lem:derivatives} and~\ref{lem:bound-weight} with $|\Cc(x)|\lesssim(1+|x|)^{-\beta}$, we find
\[\expec{\|DZ(\Aa)\|_\Hf^4}^\frac14\,\le\,\|\nabla h(G)\|_{\Ld^\infty}\expec{\3\parfct{A}Z(A)\3_\beta^4}^\frac14,\]
with $\3\cdot\3_\beta$ defined as in the statement. We turn to the second Malliavin derivative.
Setting $\Cc=\Cc_0\ast \Cc_0$ and noting that $\|\zeta\|_{\Hf}=\|\Cc_0\ast\zeta\|_{\Ld^2}$, we find
\begin{align*}
\|D^2Z(A)\|_{\op}
=\sup_{\|\xi\|_{\Ld^2}=\|\xi'\|_{\Ld^2}=1}\int_{\R^d}\int_{\R^d}D_{xy}^2Z(A):\Cc_0\ast\xi(x)\,\otimes\,\Cc_0\ast\xi'(y)\,dxdy,
\end{align*}
hence, by Lemma~\ref{lem:derivatives},
\begin{multline}\label{eq:decomp-D2X}
\|D^2Z(A)\|_{\op}
\le\|\nabla^2h\|_{\Ld^\infty}\sup_{\|\xi\|_{\Ld^2}=\|\xi'\|_{\Ld^2}=1}\int_{\R^d}\Big|\frac{\partial Z(A)}{\partial A}\Big|\,|\Cc_0\ast\xi|\,|\Cc_0\ast\xi'|\\
+\|\nabla h\|_{\Ld^\infty}^2\sup_{\|\xi\|_{\Ld^2}=\|\xi'\|_{\Ld^2}=1}\int_{\R^d}\int_{\R^d}\Big|\frac{\partial^2Z(\Aa)}{\partial\Aa^2}(x,y)\Big|\,|\Cc_0\ast\xi(x)|\,|\Cc_0\ast\xi'(y)|\,dxdy.
\end{multline}
We start with the first RHS term. For all $p\ge1$, we may estimate
\begin{multline*}
\int_{\R^d}\Big|\frac{\partial Z(A)}{\partial A}\Big|\,|\Cc_0\ast\xi|\,|\Cc_0\ast\xi'|\,\lesssim\,\Big(\int_{\R^d}|\parfct{A,B(z)}Z(A)|^p\,dz\Big)^\frac1p\\
\times\Big(\int_{\R^d}\sup_{B(z)}|\Cc_0\ast\xi|^{\frac{2p}{p-1}}\,dz\Big)^\frac{p-1}{2p}\Big(\int_{\R^d}\sup_{B(z)}|\Cc_0\ast\xi'|^{\frac{2p}{p-1}}\,dz\Big)^\frac{p-1}{2p}.
\end{multline*}
Recall that we assume $|\Cc(x)|\lesssim(1+|x|)^{-\beta}$ and $|\Cc_0(x)|\lesssim(1+|x|)^{-(d+\beta)/2}$.
For $\beta>d$, we find $\int_{\R^d}\sup_{B(z)}|\Cc_0\ast\xi|^2dz\lesssim\|\xi\|_{\Ld^2}^2$, hence for all $p\ge1$,
\begin{align*}
\int_{\R^d}\Big|\frac{\partial Z(A)}{\partial A}\Big|\,|\Cc_0\ast\xi|\,|\Cc_0\ast\xi'|\,\lesssim\,\|\xi\|_{\Ld^2}\|\xi'\|_{\Ld^2}\Big(\int_{\R^d}|\parfct{A,B(z)}Z(A)|^p\,dz\Big)^\frac1p.
\end{align*}
For $\beta<d$, the Hardy-Littlewood-Sobolev inequality rather yields $\int_{\R^d}\sup_{B(z)}|\Cc_0\ast\xi|^{2d/\beta}dz\lesssim\|\xi\|_{\Ld^2}^{2d/\beta}$, hence the above estimate then only holds for all $1\le p\le\frac{d}{d-\beta}$.
It remains to analyze the second RHS term in~\eqref{eq:decomp-D2X}.
Computing the supremum and setting $\tilde\Cc:=|\Cc_0|\ast|\Cc_0|$, we find
\begin{multline*}
\sup_{\|\xi\|_{\Ld^2}=\|\xi'\|_{\Ld^2}=1}\int_{\R^d}\int_{\R^d}\Big|\frac{\partial^2Z(\Aa)}{\partial\Aa^2}(x,y)\Big|\,|\Cc_0\ast\xi(x)|\,|\Cc_0\ast\xi'(y)|\,dxdy\\
\,\lesssim\,\bigg(\int_{(\R^d)^2}\int_{(\R^d)^2}\Big|\frac{\partial^2Z(\Aa)}{\partial\Aa^2}(x,y)\Big|\Big|\frac{\partial^2Z(\Aa)}{\partial\Aa^2}(x',y')\Big|\tilde\Cc(x-x')\tilde\Cc(y-y')\,dxdydx'dy'\bigg)^\frac12.
\end{multline*}
Using $|\tilde\Cc(x)|\lesssim(1+|x|)^{-\beta}$ and proceeding to a radial change of variables as in the proof of Lemma~\ref{lem:bound-weight}, the conclusion follows.
\end{proof}
}

\section*{Acknowledgements}
We warmly thank Ivan Nourdin for explanations on second order Poincar\'e inequalities.
We acknowledge financial support from the European Research Council under
the European Community's Seventh Framework Programme (FP7/2014-2019 Grant Agreement
QUANTHOM 335410). 

\addtocontents{toc}{\protect\setcounter{tocdepth}{1}}
\bigskip
\bibliographystyle{plain}

\def\cprime{$'$} \def\cprime{$'$}

\end{document}